\patchcmd\Gread@eps{\@inputcheck#1 }{\@inputcheck"#1"\relax}{}{}
\newtheorem{theorem}{Theorem}[section]
\newtheorem{corollary}[theorem]{Corollary}
\newtheorem{lemma}[theorem]{Lemma}
\newtheorem{example}[theorem]{Example}
\newtheorem{definition}[theorem]{Definition}
\newcommand{\qed}{\hfill $\square$\medskip}
\begin{document}

\title{Golden ratio in graph theory: A survey }

\author{ 
Saeid Alikhani$^{}$\footnote{Corresponding author} \and Nima Ghanbari 
}

\date{\today}

\maketitle

\begin{center}

Department of Mathematical Sciences, Yazd University, 89195-741, Yazd, Iran\\

\medskip
\medskip
{\tt  ~~
alikhani@yazd.ac.ir, n.ghanbari.math@gmail.com
 }

\end{center}

\begin{abstract}
Much has been written about the golden ratio $\phi=\frac{1+\sqrt{5}}{2}$ and this strange number appears mysteriously in many mathematical calculations. In this article, we review the appearance of this number in the graph theory. More precisely, we review the relevance of this number in topics such as  the number of spanning trees, topological indices, energy, chromatic roots,  domination roots and  the number of domatic partitions of graphs. 
 
\end{abstract}

\noindent{\bf Keywords:} Golden ratio, domatic partition, dominating set.  
  
\medskip
\noindent{\bf AMS Subj.\ Class.}:  05C69.

\section{Introduction}

The graph $G$ is the ordered pair $\big (V(G) ,E(G)\big )$ where $ V(G) $ is  the set of elements called vertices and $E(G)$ is a finite set of pairs of distinct elements $V(G)$ called set of edges. Two vertices such as $v_1$ and $v_2$ are called adjacent, whenever $\{v_1,v_2\}\in E(G) $. The vertex $x$ is called a common neighbor of $v_1$ and $v_2$, whenever $x$ with both vertices
$v_1$ and $v_2$ are adjacent. The degree of the vertex $v$ is the number of edges connected to the vertex $v$ and it is denoted by the symbol $deg(v)$ or $d(v)$. The vertex $v$ is called isolate,  if
$d(v)=0$. The minimum and maximum degrees of the vertices of the graph $G$ are denoted by   $\delta(G)$
and $\Delta(G)$, respectively.  The graph $H$ is called a subgraph of the graph $G$, whenever $V(H)\subseteq V(G)$ and $E(H)\subseteq E(G)$.
If we have $V(H)=V(G)$ for the subgraph $H$, then we call $H$ a spanning subgraph of $G$. A spanning tree of a graph $G$ is a spanning subgraph $G$ which is also a tree. The number of spanning trees of a graph $G$, usually denoted by the symbol $\tau(G)$, is an important parameter in graph theory that has many applications, including in chemistry and nanotechnology.
The sequence $F_n$ of natural numbers defined by the equations $F_0 = 0$, $F_1 = 1$ and $F_n = F_{n-1}+F_{n-2}$ $(n\geq 2)$ is called the Fibonacci sequence. The $n$-th term in the sequence is called the $n$-th Fibonacci number. The Lucas sequence is denoted by $L_n$ is $L_{n}=L_{n-1}+L_{n-2}$ for $n\geq 3$ with initial conditions $L_1=1$ and $L_2  = 3$.
In mathematics, two quantities have the golden ratio (often denoted by the symbol $\phi$) if their ratio is equal to the ratio of their sum to the larger quantity. This property can be expressed algebraically as follows when $a>b>0$:
\[
\frac{a+b}{a}=\frac{a}{b}=\phi.
\]
This ratio is an irrational number, which is also a positive answer for the equation $x^2-x-1=0$. One of the most beautiful ways to reach this golden ratio is using Fibonacci numbers in the form of $
{\lim}_{n\rightarrow \infty}\frac{F_{n+1}}{F_n}$. Much has been said and written about this interesting and surprising number \cite{Koshy}.

\medskip 
In this paper, we review the relevance of the golden ratio  in topics such as the number of spanning trees, topological indices, chromatic roots,  domination roots and  the number of domatic partitions of graphs.


\section{The number of spanning trees and $\phi$}
Spanning graph trees and their number have been of interest to mathematicians for a long time, and interestingly, the mentioned parameter has many applications in chemical and nano-mathematical sciences. In this section, we examine the relationship between the number of spanning trees of some graphs and the golden ratio.
The number of spanning trees of a graph can be a large number, for example the famous Petersen graph has $2000$ labeled spanning trees \cite{Koshy}.

Let $\tau(G)$ be the number of spanning trees of graph $G$. Therefore $\tau(G) \geq 1$ if and only if $G$ is connected and $\tau(G) = 1$ if and only
If $G$ is a tree.
The edge $e$ of the graph $G$ is said to be contracted, whenever the edge $e$ of the graph $G$ is removed and the terminals of this edge coincide. The new graph created under this operation is the contraction of the edge $e$ in $G$ which is denoted by $G\circ e$. A recursive relation to find the number of spanning trees of an arbitrary graph $G$ is as follows.

\begin{theorem}{\rm\cite{West}}\label{span}
	If $e$ is an edge of the graph $G$, then $\tau(G) = \tau(G - e) + \tau(G\circ e)$.
\end{theorem}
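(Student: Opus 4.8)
The plan is to prove the deletion-contraction recurrence $\tau(G) = \tau(G-e) + \tau(G \circ e)$ by partitioning the set of spanning trees of $G$ according to whether or not they contain the fixed edge $e$. Write $e = \{u,v\}$, and let $\mathcal{T}(G)$ denote the collection of all spanning trees of $G$. Every spanning tree $T \in \mathcal{T}(G)$ either uses the edge $e$ or does not, and these two cases are mutually exclusive and exhaustive, so
\[
\tau(G) = |\{\, T \in \mathcal{T}(G) : e \notin E(T)\,\}| + |\{\, T \in \mathcal{T}(G) : e \in E(T)\,\}|.
\]
The strategy is then to identify each of these two counts with $\tau(G-e)$ and $\tau(G \circ e)$ respectively, by exhibiting a bijection in each case.

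First I would handle the spanning trees \emph{avoiding} $e$. A spanning subgraph $T$ of $G$ with $e \notin E(T)$ is precisely a spanning subgraph of $G - e$, since the two graphs share the same vertex set and $G-e$ has exactly the edges of $G$ other than $e$. Such a $T$ is a spanning tree of $G$ if and only if it is a spanning tree of $G-e$ (being a tree is an intrinsic property of the subgraph and does not depend on which ambient graph we view it in). This gives an exact identification of the first class with $\mathcal{T}(G-e)$, so its cardinality is $\tau(G-e)$.

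Next I would treat the spanning trees \emph{containing} $e$. The key map is contraction: given a spanning tree $T$ of $G$ with $e \in E(T)$, form $T \circ e$ by deleting $e$ and identifying its endpoints $u$ and $v$. I claim $T \circ e$ is a spanning tree of $G \circ e$, and that the assignment $T \mapsto T \circ e$ is a bijection from $\{\, T \in \mathcal{T}(G) : e \in E(T)\,\}$ onto $\mathcal{T}(G \circ e)$. For surjectivity and injectivity one checks that any spanning tree $S$ of $G \circ e$ pulls back to a unique spanning tree of $G$ containing $e$ by re-splitting the merged vertex and reinserting $e$. The main thing to verify is that contraction preserves the tree property: $T$ connected and acyclic on $n$ vertices with $n-1$ edges maps to a connected graph on $n-1$ vertices with $n-2$ edges, which forces it to be a tree. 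Hence this class has cardinality $\tau(G \circ e)$.

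The main obstacle, and the step demanding the most care, is the bijectivity claim in the contraction case. Contraction can in general create multi-edges, so one must be careful that the image is counted correctly and that no two distinct spanning trees of $G$ collapse to the same tree of $G \circ e$; here the fact that we restrict to trees (which contain no parallel edges within themselves) keeps the correspondence clean. Once both classes are matched to $\tau(G-e)$ and $\tau(G \circ e)$, adding the two counts yields the claimed identity.
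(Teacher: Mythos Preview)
The paper does not actually supply a proof of this theorem; it is quoted from West's textbook and used as a black box. Your argument---partitioning $\mathcal{T}(G)$ by whether $e$ is used, identifying the $e$-avoiding trees with $\mathcal{T}(G-e)$ directly, and matching the $e$-using trees with $\mathcal{T}(G\circ e)$ via contraction---is the standard textbook proof (and is essentially what one finds in West), and it is correct. Your remark about multi-edges is the right caution: when $u$ and $v$ share a neighbour $w$, the contraction $G\circ e$ acquires parallel edges to $w$, and one must treat these as distinct labelled edges (inherited from $uw$ and $vw$ in $G$) so that the inverse map ``split the merged vertex and reinsert $e$'' is well-defined; with that convention the bijection goes through exactly as you describe.
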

The recurrence relation $\tau(G)$ is computationally long and somewhat tedious despite its interestingness. The tree-matrix theorem provides another way to calculate $\tau(G)$ using linear algebra.
\vspace{-.2cm}
\begin{theorem} {\rm \cite{West}} { (tree-matrix) }
	Suppose $G$ is a graph with order $n\geq 2$ and set of vertices $ \{ v_1 , v_2 , \ldots , v_n \}$. In this case, $\tau(G)$ is equal to the cofactor  of each entry  of the matrix $D(G) - A(G)$, where $D(G)$ is a diagonal matrix.
	\[
	{(d_{ij})}_{n \times n}=\left\{\begin{array}{cc} {\deg (v_i)} & \quad{ i= j}\\[15pt]
	{0} & \quad{ i \not= j}
	\end{array} \right.
	\]
	and $A(G)$ is the adjacency matrix of $G$.
\end{theorem}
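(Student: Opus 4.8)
The plan is to write $L = D(G) - A(G)$ for the Laplacian matrix and to establish two things: first, that all cofactors of $L$ are equal, so that ``the cofactor of each entry'' is well defined; and second, that this common value equals $\tau(G)$. For the first point I would use that every row of $L$ sums to zero — the off-diagonal entries in row $i$ are $-1$ for each neighbour of $v_i$ and $0$ otherwise, so they total $-\deg(v_i)$, cancelling the diagonal entry — and likewise every column sums to zero. Hence $L\mathbf{1}=0$ and $\mathbf{1}^{\top}L=0$, so $L$ is singular and $L\cdot\operatorname{adj}(L)=\operatorname{adj}(L)\cdot L=\det(L)\,I=0$. Thus every column of $\operatorname{adj}(L)$ lies in the kernel of $L$ and every row in the kernel of $L^{\top}$; a short argument (splitting into the cases $\operatorname{rank}L=n-1$ and $\operatorname{rank}L<n-1$) then forces all entries of $\operatorname{adj}(L)$, that is all cofactors of $L$, to coincide. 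Write $t(G)$ for this common cofactor.

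For the second point I would reuse the deletion--contraction recurrence of Theorem~\ref{span} and argue by induction on the number of edges $m$ that $t(G)=\tau(G)$. The base case $m=0$ is immediate: a single vertex gives the empty $0\times 0$ minor with value $1=\tau$, while $n\ge 2$ isolated vertices give a zero matrix whose $(n-1)\times(n-1)$ minor is $0=\tau$. For the inductive step, fix an edge $e=v_iv_j$ and compute $t(G)$ as the principal minor obtained by deleting row and column $j$. Comparing $L(G)$ with $L(G-e)$, the only change that survives the deletion of row and column $j$ is the diagonal entry at position $i$, which drops by one. Hence the reduced matrix of $G$ equals that of $G-e$ plus a matrix having a single $1$ in the diagonal position indexed by $i$, and expanding the determinant by multilinearity in that row splits $t(G)$ into $t(G-e)$ plus the minor of $L(G-e)$ obtained by deleting both rows and columns $i$ and $j$.

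The crux — and the step I expect to need the most care — is identifying that last doubly-reduced minor with $t(G\circ e)$. Contracting $e$ merges $v_i$ and $v_j$ into one vertex $z$, and deleting the row and column of $z$ from $L(G\circ e)$ produces a matrix indexed by $V(G)\setminus\{v_i,v_j\}$. I would check entrywise that this coincides with $L(G-e)$ after deleting rows and columns $i$ and $j$: off-diagonal entries count edges between two surviving vertices, which contraction leaves unchanged, and each diagonal entry is the degree of a surviving vertex, also unchanged since $e$ is not incident to it. This yields $t(G)=t(G-e)+t(G\circ e)$, matching the recurrence for $\tau$ in Theorem~\ref{span}; since $G-e$ and $G\circ e$ each have fewer edges, induction closes the argument. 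The only bookkeeping to watch is that contracting a simple graph can create parallel edges, so one should work with loopless multigraphs throughout and confirm that both the recurrence and the entrywise identification persist with multiplicities. As a self-contained alternative, one could instead fix an orientation of $G$, form the signed incidence matrix $B$ with $L=BB^{\top}$, and apply the Cauchy--Binet formula to a principal minor of $L$: the summands are squared determinants of $(n-1)\times(n-1)$ submatrices of $B$, each equal to $1$ when the chosen edges form a spanning tree and $0$ otherwise.
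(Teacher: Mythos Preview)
The paper does not actually prove this theorem: it is stated with a citation to West's textbook and used as a tool, with no accompanying argument. There is therefore nothing in the paper to compare your proposal against.

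That said, your sketch is a correct and standard route to the Matrix--Tree Theorem. The equality of all cofactors via $L\,\operatorname{adj}(L)=\operatorname{adj}(L)\,L=0$ together with the rank dichotomy is fine; the deletion--contraction induction is the argument one finds, for example, in West's book, and your identification of the doubly-reduced minor with $t(G\circ e)$ is correct provided you work in loopless multigraphs as you note (so that a common neighbour of $v_i$ and $v_j$ contributes a double edge to $z$ and the degree of each surviving vertex is preserved). One small point of care: in the multilinearity step, the ``extra'' term is the $(i,i)$-minor of the already $j$-reduced matrix, and you should remark that this equals the $\{i,j\}$-reduced minor of $L(G)$ as well as of $L(G-e)$, since the two Laplacians differ only in rows and columns $i$ and $j$; you implicitly use this when matching with $L(G\circ e)$. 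Your Cauchy--Binet alternative is also a complete and self-contained proof and is arguably cleaner, since it avoids the multigraph bookkeeping entirely.
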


Klitman and Golden in 1975 obtained the number of spanning trees of the second power of cycle $C_n$ as follows \cite{Kle}. We remind that the quadratic of a graph $G$ is a graph obtained by adding edges between vertices whose distance is equal to two. For example, the square of the graph $C_4$ is equal to the graph $K_4$.

\begin{theorem}{\rm\cite{Kle}}
	The number of spanning trees of the graph $C_n^2$ for $n\geq 5$ is equal to
	$\tau(C_n^2)=nF_n^2$.
\end{theorem}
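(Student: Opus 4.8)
The plan is to apply the tree-matrix theorem stated above, exploiting the fact that for $n\ge 5$ the square $C_n^2$ is exactly the circulant graph on vertices $\{0,1,\dots,n-1\}$ in which $i$ is joined to $i\pm1$ and $i\pm2 \pmod n$; in particular it is $4$-regular. Its Laplacian $D-A$ is then a circulant matrix, so its eigenvalues can be read off directly as $\lambda_k = 4 - 2\cos\frac{2\pi k}{n} - 2\cos\frac{4\pi k}{n}$ for $k=0,1,\dots,n-1$, with $\lambda_0=0$. Since every cofactor of the Laplacian equals $\tau$, and a standard consequence identifies that common value with $\frac1n$ times the product of the nonzero Laplacian eigenvalues, I would start from
\[
\tau(C_n^2)=\frac1n\prod_{k=1}^{n-1}\lambda_k .
\]

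The second step is to simplify each factor. Writing $\cos\frac{4\pi k}{n}=2\cos^2\frac{2\pi k}{n}-1$ and factoring the resulting quadratic in $\cos\frac{2\pi k}{n}$ gives $\lambda_k = 2\bigl(1-\cos\tfrac{2\pi k}{n}\bigr)\bigl(3+2\cos\tfrac{2\pi k}{n}\bigr) = 4\sin^2\tfrac{\pi k}{n}\,\bigl(3+2\cos\tfrac{2\pi k}{n}\bigr)$. The product then splits into a sine part and a cosine part. Using the classical evaluation $\prod_{k=1}^{n-1}2\sin\frac{\pi k}{n}=n$, the sine part contributes $\prod_{k=1}^{n-1}4\sin^2\frac{\pi k}{n}=n^2$, which cancels the $\frac1n$ and leaves the clean reduction
\[
\tau(C_n^2)=n\prod_{k=1}^{n-1}\Bigl(3+2\cos\tfrac{2\pi k}{n}\Bigr),
\]
so everything reduces to proving that this last product equals $F_n^2$.

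For the cosine product I would pass to roots of unity. With $\omega=e^{2\pi i/n}$ one has $3+2\cos\frac{2\pi k}{n}=3+\omega^k+\omega^{-k}=\omega^{-k}(\omega^k+\phi^2)(\omega^k+\phi^{-2})$, since $x^2+3x+1=(x+\phi^2)(x+\phi^{-2})$ (using $\phi^2+\phi^{-2}=3$ and $\phi^2\phi^{-2}=1$). Multiplying over $k=1,\dots,n-1$, the monomials give $\prod_{k=1}^{n-1}\omega^{-k}=(-1)^{n-1}$, while $\prod_{k=1}^{n-1}(a+\omega^k)=\frac{a^n-(-1)^n}{a+1}$ follows from the identity $\prod_{k=0}^{n-1}(a+\omega^k)=a^n-(-1)^n$. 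Applying this with $a=\phi^2$ and $a=\phi^{-2}$ and simplifying the denominator $(\phi^2+1)(\phi^{-2}+1)=5$ turns the whole product into $\frac{L_{2n}-2(-1)^n}{5}$, where I recognize $\phi^{2n}+\phi^{-2n}=L_{2n}$.

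The final and most delicate step is the number-theoretic identity $5F_n^2=L_{2n}-2(-1)^n$, which closes the argument; it is cleanest to verify it from Binet's formulas $F_n=(\phi^n-\psi^n)/\sqrt5$ and $L_m=\phi^m+\psi^m$ with $\psi=-1/\phi$ and $\phi\psi=-1$, so that $5F_n^2=\phi^{2n}+\psi^{2n}-2(\phi\psi)^n=L_{2n}-2(-1)^n$. The main obstacle I anticipate is not any single computation but the bookkeeping of signs and of the omitted $k=0$ term when converting the trigonometric product to the roots-of-unity product: getting the parity factors $(-1)^{n-1}$ and $(-1)^n$ to cancel correctly is exactly what makes the answer the clean square $F_n^2$ rather than a signed or off-by-one variant.
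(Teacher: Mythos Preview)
Your argument is correct. The Laplacian of $C_n^2$ for $n\ge 5$ is indeed the circulant with first row $(4,-1,-1,0,\dots,0,-1,-1)$, your factorization $\lambda_k=4\sin^2\frac{\pi k}{n}\,(3+2\cos\frac{2\pi k}{n})$ checks, the roots-of-unity computation (including the sign bookkeeping) is clean, and the closing identity $5F_n^2=L_{2n}-2(-1)^n$ follows from Binet exactly as you say.

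There is nothing to compare against, however: the paper does not supply a proof of this theorem. It is quoted as a result of Kleitman and Golden \cite{Kle} and then used only as motivation. So your spectral/circulant argument stands on its own rather than as an alternative to anything in the text. If anything, it dovetails nicely with the surrounding material, since the paper's own proof of $\tau(W_n)=L_{2n}-2$ also passes through the tree--matrix theorem and Binet's formula, and your final step uses the same Lucas--Fibonacci identity $5F_n^2=L_{2n}-2(-1)^n$ that links $F_n^2$ to $L_{2n}$.
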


\begin{figure}[h]
	\begin{center} 
	\includegraphics[width=0.5\textwidth]{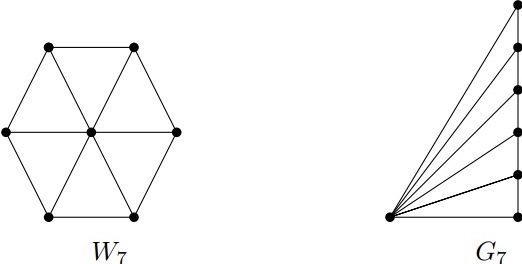}
	\caption{wheel graph $W_7$ and fan graph $G_7$}\label{fan}
	\end{center} 
\end{figure}

Joining two graphs $G_1=(V_1,E_1)$ and $G_2=(V_2,E_2)$
is denoted by $G_1 \vee G_2$ and  is the graph with the set of vertices  $V_1\cup V_2$ and its edge set includes all the edges in the set
\[E_1\cup E_2\cup \{\{x,y\}~|~ x\in V_1,~y\in V_2\}.\]
 For example, the graph resulting from joining $K_2$ and $K_2$ is the complete graph $(K_4)$ of order 4. The graph resulting from joining the circle graph of $C_n$ and $K_1$ is  the wheel graph which  is denoted  by $W_n=C_n\vee K_1$. The fan graph is denoted by the symbol $G_n$ and is the join of the path  $P_n$ and $K_1$. In the figure \ref{fan} you can see the graph of $W_7$ and $G_7$. Next, using the tree-matrix theorem, we obtain the number of spanning trees of the wheel graph (\cite{Koshy}). We need the following theorem, known as Binet's theorem, which was obtained in 1843 by the French mathematician Jacques-Philippe Marie Binet.

\begin{theorem}\label{Binet}{\rm\cite{Koshy}}
	If $\alpha$ and $\beta$ are two roots of the equation $x^2-x-1=0$, then
	\begin{itemize}
		\item
		For each natural number $n$ we have:
		\[
		F_n=\frac{\alpha^n-\beta^n}{\alpha-\beta}=\frac{1}{\sqrt{5}}(\phi^n-(1-\phi)^n) .
		\]
		\item
		For each natural number $n$ we have:
		\[
		L_n=\alpha^n+\beta^n=\phi^n+(1-\phi)^n.
		\]
	\end{itemize}
\end{theorem}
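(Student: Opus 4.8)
The plan is to exploit the defining property of the roots $\alpha$ and $\beta$, namely that each satisfies $x^2 = x+1$, so that the two target sequences $\alpha^n + \beta^n$ and $(\alpha^n - \beta^n)/(\alpha-\beta)$ inherit the very recurrence that defines $L_n$ and $F_n$. First I would record the elementary consequences of Vieta's formulas for $x^2 - x - 1 = 0$: the sum of roots $\alpha + \beta = 1$, the product $\alpha\beta = -1$, and hence $\alpha - \beta = \sqrt{5}$. I would also identify the roots explicitly as $\alpha = \phi = \frac{1+\sqrt{5}}{2}$ and $\beta = 1 - \phi = \frac{1-\sqrt{5}}{2}$, so that the two stated forms of each identity coincide once the closed form in terms of $\alpha$ and $\beta$ is established.

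For the Fibonacci identity, I would set $G_n := \frac{\alpha^n - \beta^n}{\alpha - \beta}$ and argue by strong induction. The base cases are immediate: $G_0 = 0 = F_0$ and $G_1 = 1 = F_1$. For the inductive step, multiplying $\alpha^2 = \alpha + 1$ by $\alpha^{n-2}$ (and likewise for $\beta$) yields $\alpha^n = \alpha^{n-1} + \alpha^{n-2}$ and $\beta^n = \beta^{n-1} + \beta^{n-2}$; subtracting these and dividing by $\alpha - \beta$ shows $G_n = G_{n-1} + G_{n-2}$. Since $G_n$ satisfies the Fibonacci recurrence with the same two initial values, $G_n = F_n$ for every $n$, which is exactly the first claimed equality; substituting $\alpha = \phi$, $\beta = 1-\phi$, $\alpha - \beta = \sqrt{5}$ then gives the second form.

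The Lucas identity is handled identically with $H_n := \alpha^n + \beta^n$. The only difference lies in the base cases, which must be matched to the given initial conditions $L_1 = 1$, $L_2 = 3$: here $H_1 = \alpha + \beta = 1 = L_1$, while $H_2 = \alpha^2 + \beta^2 = (\alpha+\beta)^2 - 2\alpha\beta = 1 + 2 = 3 = L_2$, the last step using $\alpha\beta = -1$. Adding the relations $\alpha^n = \alpha^{n-1}+\alpha^{n-2}$ and $\beta^n = \beta^{n-1}+\beta^{n-2}$ gives $H_n = H_{n-1} + H_{n-2}$, so $H_n = L_n$ for all $n \geq 1$, and again substituting $\alpha = \phi$, $\beta = 1-\phi$ yields the stated golden-ratio form.

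There is no genuine obstacle here: the argument is entirely routine once one recognizes that both target expressions solve the characteristic recurrence. The one point demanding a little care is bookkeeping at the base of the induction. One must verify $F_0$ and $F_1$ (two consecutive seed values) for the Fibonacci case, and, because the Lucas sequence in this paper is indexed from $L_1 = 1$, $L_2 = 3$, check $H_1$ and $H_2$ rather than $H_0$ and $H_1$. Aligning the correct pair of seed values with the stated initial conditions is the only place where a slip could occur.
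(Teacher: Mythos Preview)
Your argument is correct and is the standard proof of Binet's formulas via the characteristic equation and induction on the shared recurrence. Note, however, that the paper does not supply its own proof of this theorem: it is stated with a citation to Koshy and used as a tool, so there is no in-paper argument to compare against. Your approach is exactly the classical one found in that reference.
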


\begin{theorem}\label{nw}{\rm\cite{Koshy}}
	The number of spanning trees of the wheel graph $W_n$ for $n\geq 3$ is
		\[
	\tau(W_n)=\phi^{2n}+(1-\phi)^{2n}-2.
	\]
\end{theorem}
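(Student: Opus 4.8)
The plan is to apply the tree-matrix theorem to the wheel $W_n=C_n\vee K_1$. Label the hub by $v_0$ and the rim vertices by $v_1,\dots,v_n$; then the hub has degree $n$ while every rim vertex has degree $3$ (its two rim neighbours together with the hub). Writing down the matrix $D(W_n)-A(W_n)$ and taking the cofactor obtained by deleting the row and column indexed by the hub $v_0$, I would be left with the $n\times n$ principal submatrix
\[
M=3I_n-A(C_n),
\]
where $A(C_n)$ is the adjacency matrix of the rim cycle and the diagonal entry $3$ records the degree of each rim vertex. By the tree-matrix theorem, $\tau(W_n)=\det M$, so the entire problem reduces to evaluating this one determinant.

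Next I would diagonalise. Since $A(C_n)$ is circulant, its eigenvalues are $2\cos\frac{2\pi k}{n}$ for $k=0,1,\dots,n-1$, and hence the eigenvalues of $M=3I_n-A(C_n)$ are $3-2\cos\frac{2\pi k}{n}$. Therefore
\[
\tau(W_n)=\det M=\prod_{k=0}^{n-1}\left(3-2\cos\frac{2\pi k}{n}\right),
\]
and one sees immediately that the $k=0$ factor equals $3-2=1$, a reassuring sanity check.

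The crux is to put this trigonometric product into closed form, and this is the step I expect to be the main obstacle. I would substitute $3=2\cosh t$, i.e.\ $\cosh t=\tfrac32$, so that $\sinh t=\tfrac{\sqrt5}{2}$ and $e^{t}=\cosh t+\sinh t=\tfrac{3+\sqrt5}{2}=\phi^{2}$; using $\phi(1-\phi)=-1$ this also gives $e^{-t}=\phi^{-2}=(1-\phi)^{2}$. Each factor then splits as $2\cosh t-2\cos\theta=e^{-t}(e^{t}-e^{i\theta})(e^{t}-e^{-i\theta})$, so taking the product over $k$ with $\theta=\frac{2\pi k}{n}$ and invoking $\prod_{k=0}^{n-1}(z-e^{2\pi i k/n})=z^{n}-1$ twice (with $z=e^{t}$, noting the conjugate roots form the same set) collapses everything to
\[
\det M=e^{-nt}\bigl(e^{nt}-1\bigr)^{2}=e^{nt}+e^{-nt}-2 .
\]
Substituting back $e^{nt}=\phi^{2n}$ and $e^{-nt}=(1-\phi)^{2n}$ yields exactly $\tau(W_n)=\phi^{2n}+(1-\phi)^{2n}-2$.

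Finally, I would emphasise the golden-ratio interpretation by recognising $\phi^{2n}+(1-\phi)^{2n}$ as $L_{2n}$, which is precisely Binet's formula (Theorem \ref{Binet}) evaluated at index $2n$, so that $\tau(W_n)=L_{2n}-2$. An alternative route that avoids complex roots of unity would be to use the Chebyshev identity $\det\bigl(xI_n-A(C_n)\bigr)=2T_n(x/2)-2$ and then evaluate $T_n(3/2)=\cosh(nt)=\tfrac12 L_{2n}$; the associated tridiagonal minors satisfy the three-term recurrence with characteristic equation $\lambda^{2}-3\lambda+1=0$, whose roots are exactly $\phi^{2}$ and $(1-\phi)^{2}$, and matching the initial data $\tau(W_3)=\tau(K_4)=16$ and $\tau(W_4)=45$ recovers the same closed form. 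Either way, Binet's theorem is the tool that converts the algebraic answer into the stated expression in $\phi$.
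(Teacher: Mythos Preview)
Your proof is correct and begins exactly as the paper's does: apply the tree-matrix theorem to $W_n$, strike out the row and column of the hub, and reduce to the determinant of $3I_n-A(C_n)$. The difference lies in the evaluation of that determinant. The paper simply asserts $\det(A_n)=L_{2n}-2$ without computation and then invokes Binet's formula to rewrite $L_{2n}$ as $\phi^{2n}+(1-\phi)^{2n}$. You instead compute the determinant explicitly, using the circulant eigenvalues $3-2\cos\frac{2\pi k}{n}$ and the hyperbolic substitution $e^{t}=\phi^{2}$ to collapse the product to $e^{nt}+e^{-nt}-2$; this lands directly on the $\phi$-expression and only afterwards identifies it with $L_{2n}-2$. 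So your argument is strictly more complete: it supplies the missing step the paper glosses over, at the cost of a bit of extra machinery (roots of unity, or alternatively the Chebyshev/recurrence route you sketch), while the paper's version is terser but leaves the key determinant identity unproved.
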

\begin{proof}
	If we denote the central vertex of the wheel graph by the symbol $v_{n+1}$ and the rest of the graph vertices by $v_1, ..., v_n$, then for $i\neq n+1$ we have
	$\deg(v_i)=3$ and $\deg(v_{n+1})=n$. According to the tree-matrix theorem, we consider the following matrix:
	
	{
		\begin{eqnarray*}
			D(W_n)-A(W_n)&=&
			\left(\begin{array}{ccccccc}
				3& 0&0&0&\ldots &0&0 \\
				0& 3 &0 &0&\ldots &0&0 \\
				0& 0& 3 &0 & \ldots &0&0 \\
				&& & & \ldots && \\
				&& & & \ldots && \\
				0 &0& 0 &0 & \ldots &&n \\
			\end{array}\right)
			-
			\left(\begin{array}{ccccccc}
				0& 1&0&0&\ldots &0&1 \\
				1& 0 &1 &0&\ldots &0&0 \\
				0& 1& 0 &1 & \ldots &0&0 \\
				&& & & \ldots && \\
				&& & & \ldots && \\
				0 &0& 0 &0 & \ldots &1&0 \\
			\end{array}\right)\\
			&=&
			\left(\begin{array}{cccc|c}
				&&&&-1 \\
				&&A_n&&-1 \\
				&&&&\vdots \\
				&&&&-1 \\
				\hline
				-1&-1&\ldots &-1&n\\
			\end{array} \right),
		\end{eqnarray*} }
		Now, it is enough to obtain the cofactor of arbitrary entry of the matrix $D(W_n) - A(W_n)$. Here, we consider the entry in row $(n+1)$ and column $(n+1)$. We  have:
		\[
		\tau(W_n)=\det(A_n)=L_{2n}-2.
		\]
		Now, using  Theorem \ref{Binet}, we have the result. \qed
	\end{proof}

	Now, using the number of spanning trees of the wheel graph, we find the number of spanning trees of the fan graph:
	
	\begin{theorem}
		The number of spanning trees of the fan graph $G_n$ is equal to
		\[
		\tau(G_n)=\phi^{2n-1}+(1-\phi)^{2n-1}.
		\]
	\end{theorem}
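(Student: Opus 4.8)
The plan is to obtain $\tau(G_n)$ from the already-computed wheel value $\tau(W_n)=\phi^{2n}+(1-\phi)^{2n}-2$ of Theorem \ref{nw} by a single application of the deletion--contraction recurrence (Theorem \ref{span}). The structural observation that drives everything is that the wheel $W_n$ is obtained from the fan $G_n$ by adding exactly one edge: if $v_1,\dots,v_n$ are the path vertices of $G_n=P_n\vee K_1$ and $c$ is the hub, then joining the two endpoints $v_1$ and $v_n$ closes $P_n$ into the cycle $C_n$ and turns $G_n$ into $W_n$. Writing $e=\{v_1,v_n\}$ for this closing edge, we have $W_n-e=G_n$, so Theorem \ref{span} applied to $W_n$ and $e$ gives
\[
\tau(W_n)=\tau(W_n-e)+\tau(W_n\circ e)=\tau(G_n)+\tau(W_n\circ e),
\]
whence $\tau(G_n)=\tau(W_n)-\tau(W_n\circ e)$.

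To see how this should land on the stated closed form, I would rewrite the closed forms of Theorem \ref{nw} through the Lucas identity $L_m=\phi^m+(1-\phi)^m$ of Theorem \ref{Binet}, so that $\tau(W_n)=L_{2n}-2$. By the Lucas recurrence $L_{2n}=L_{2n-1}+L_{2n-2}$ we have $L_{2n-1}=L_{2n}-L_{2n-2}=(L_{2n}-2)-(L_{2n-2}-2)$; since $L_{2n-2}-2=\tau(W_{n-1})$ is again Theorem \ref{nw} (with $n$ replaced by $n-1$), the target $\tau(G_n)=L_{2n-1}$ is exactly equivalent to the single identity $\tau(W_n\circ e)=\tau(W_{n-1})=L_{2n-2}-2$. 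Granting that identity, the subtraction above yields $\tau(G_n)=L_{2n-1}$, and one last appeal to Binet's theorem rewrites $L_{2n-1}=\phi^{2n-1}+(1-\phi)^{2n-1}$, which is the claim.

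The main obstacle is precisely the evaluation of $\tau(W_n\circ e)$, and this is where care is essential. Contracting $e$ identifies the path-endpoints $v_1$ and $v_n$ into a single rim vertex, so the rim $C_n$ collapses to $C_{n-1}$ still joined to the hub; but the two spokes $\{c,v_1\}$ and $\{c,v_n\}$ now share both endpoints and become a pair of parallel edges, so $W_n\circ e$ is not literally the wheel $W_{n-1}$ but a multigraph carrying one doubled spoke. The delicate heart of the argument is to pin down the spanning-tree count of this contracted multigraph and reconcile it with $\tau(W_{n-1})$ — for instance by a further deletion--contraction on the parallel spoke or by a direct matrix-tree (cofactor) computation in the spirit of Theorem \ref{nw}. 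Once that contraction is handled correctly, the remaining work (the Lucas recurrence and Binet's theorem) is entirely routine.
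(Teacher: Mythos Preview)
Your approach is exactly the paper's: apply Theorem~\ref{span} to the rim edge $e=\{v_1,v_n\}$ of $W_n$, identify $W_n-e=G_n$, and then subtract. The paper simply asserts $\tau(W_n)=\tau(G_n)+\tau(W_{n-1})$ and moves on; you are more careful and correctly observe that $W_n\circ e$ is not the simple wheel $W_{n-1}$ but $W_{n-1}$ with one doubled spoke, and you isolate the needed identity $\tau(W_n\circ e)=\tau(W_{n-1})$ as the step still to be justified.

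Unfortunately that identity is false, so the step you flagged cannot be completed, and the paper's (unflagged) use of it is an error. For $n=4$ the contraction $W_4\circ e$ is $K_4$ with one edge doubled; a cofactor of its Laplacian
\[
\begin{pmatrix} 4 & -1 & -1 \\ -1 & 3 & -1 \\ -1 & -1 & 3 \end{pmatrix}
\]
gives $\tau(W_4\circ e)=24$, whereas $\tau(W_3)=\tau(K_4)=16$. Consequently $\tau(G_4)=\tau(W_4)-\tau(W_4\circ e)=45-24=21$, while the stated formula predicts $L_{7}=\phi^{7}+(1-\phi)^{7}=29$. The doubled spoke genuinely adds spanning trees (every spanning tree of $W_{n-1}$ using that spoke is counted twice in the multigraph), so no ``reconciliation'' with $\tau(W_{n-1})$ is possible. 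In fact the theorem as stated is incorrect: the well-known count is $\tau(G_n)=F_{2n}$, which one checks directly for $n=2,3,4$ ($3,8,21$) and which satisfies the fan recurrence $\tau(G_n)=3\tau(G_{n-1})-\tau(G_{n-2})$. Your instinct to scrutinise the contraction was exactly right; it exposes a genuine gap in the paper's argument rather than a detail to be filled in.
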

	\begin{proof}
		According to Theorem \ref{span} for the wheel graph, we have:
		$\tau(W_n) = \tau(G_n) + \tau(W_{n-1})$. So $\tau(G_n)=\tau(W_n)-\tau(W_{n-1})$ and the result is follows by Theorem \ref{nw}. \qed
			\end{proof}

\section{Topological indices and $\phi$ }

The topological index of a graph $G$ is a real number dependent on the graph $G$, which represents a molecular chemical property whose molecular graph is $G$.
This number does not depend on the structure or appearance of the graph. In the chemical and mathematical graph theory of chemistry, a topological index, also known as a connectivity index, is a type of molecular descriptor that is calculated based on the corresponding graph of the molecule of a chemical compound.
Topological indices have many applications as tools for modelling the chemical properties of molecules. Wiener's index is one of the main studied topological indices in terms of theory and application. This index was the first topological index used in chemistry.
The boiling point of organic compounds, as well as all their physical properties, is functionally dependent on their number, type, and structure. The arrangement of atoms in a molecule in a group of isomers, the number and type of atoms are constant, and there are changes in physical properties only due to changes in structural interactions. American chemist Harold Weiner \cite{Wiener}
proved that the boiling point of paraffins with the sum the distances between carbons are related. He defined this index as $W(G)=\frac{1}{2} \sum_{\{x,y\}\subseteq V(G)} d(x,y)$,  where $d(x,y)$ is the length of the shortest  path between two vertices $x$ and $y$.

The Hosoya index of graph $G$ was introduced by Haruo Hosoya  in 1971 \cite{Hosoya1}. This
 index equals the sum of the number of $k$-matching of graph $G$  and is denoted by the symbol $Z(G)$. In other words
$Z(G)=\sum_{k=0}^{\alpha'(G)}m(G, k)$, where $m(G,k)$ denotes the number of matching sets of size $k$.
This index is often used in chemistry to check organic compounds.

Among the oldest and most famous topological indices, there are two famous topological indices based on vertex degree that we consider here. The first Zagreb index and the second Zagreb index.
Zagreb indices were introduced more than forty years ago by Gutman and Trinajestic \cite{Diu}.
The first and second Zagreb indexes of graph $G$ are denoted by $M_1(G)$ and $M_2(G)$, respectively, and they are:
\[
M_1=M_1(G)=\sum_{v \in V(G)} d^2(v)\label{s08},
\]
\[
M_2=M_2(G)=\sum_{uv \in E(G)} d_G(u)d_G(v)=\sum_{uv \in E(G)} (d(u)+d(v)),
\]
where $d(u)$ is the degree of vertex $u$ in graph $G$.

The generalized of the first Zagreb index of the graph $G$ is denoted by $M_1^k(G)$ and 
is defined as follows:
$$M_1^k(G)=\sum_{u\in V(G)} d(u)^k.$$
Note that:
\begin{enumerate}
	\item $M_1^1(G)=2|E(G)|$.
	\item $M_1^2(G)=M_1(G)$.
	\item $M_1^3(G)=F(G),$
\end{enumerate}
where  $F(G)$ is called the forgotten index.

Here, we review  the relationship between the generalized Zagreb index $M_1^k(G)=\sum_{u\in V(G)} d(u)^k$ and the golden ratio.
The following theorem is a rewrite of Theorem 1 of  \cite{Knox} in the form of topological indices. In other words, the following theorem gives an upper bound for the $M_2(G)$ in terms of the sum of all vertex degrees to the power of $\phi^2$.

\begin{theorem} \label{thm1}
	For each natural number $k$ and each graph $G$ with $\Delta(G)\leq 2k$ we have:
	$$M_2(G)\leq k^{2-\phi}M_{\phi^2}(G).$$
\end{theorem}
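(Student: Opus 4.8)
The plan is to prove the bound by passing through the forgotten index $M_1^3(G)=\sum_{v\in V(G)}d(v)^3$ as an intermediate quantity, and to let the defining relation $\phi^2=\phi+1$ of the golden ratio do the final bookkeeping. Recall that here $M_2(G)=\sum_{uv\in E(G)}d(u)d(v)$ while $M_{\phi^2}(G)=\sum_{v\in V(G)}d(v)^{\phi^2}$, so the two sides of the claimed inequality range over edges and over vertices respectively; the whole difficulty is to move from an edge sum to a vertex sum without losing more than the factor $k^{2-\phi}$.

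First I would apply the arithmetic--geometric mean inequality edge by edge: for every edge $uv$ we have $d(u)d(v)\le \tfrac12\big(d(u)^2+d(v)^2\big)$. Summing this over all edges and counting contributions vertex by vertex (each vertex $w$ lies on exactly $d(w)$ edges and contributes $d(w)^2$ to each of them) turns the right-hand side into a pure vertex sum, giving
\[
M_2(G)\le \tfrac12\sum_{v\in V(G)}d(v)^3=\tfrac12\,M_1^3(G).
\]
This is the step that converts the edge sum into a vertex sum, and it is exactly where the constant $\tfrac12$ is generated.

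Next the golden ratio enters. Since $\phi^2=\phi+1$, the exponent splits as $3-\phi^2=2-\phi>0$, so for each vertex I would write $d(v)^3=d(v)^{\phi^2}\,d(v)^{\,2-\phi}$ and use the hypothesis $d(v)\le\Delta(G)\le 2k$ together with monotonicity of $t\mapsto t^{\,2-\phi}$ to bound $d(v)^{\,2-\phi}\le(2k)^{\,2-\phi}$. Summing over $v$ yields $M_1^3(G)\le (2k)^{2-\phi}M_{\phi^2}(G)$, and combining with the previous display gives $M_2(G)\le 2^{1-\phi}\,k^{2-\phi}\,M_{\phi^2}(G)$. Because $\phi>1$ forces $2^{1-\phi}<1$, the claimed inequality $M_2(G)\le k^{2-\phi}M_{\phi^2}(G)$ follows, in fact with a little room to spare.

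I do not expect a serious obstacle; the argument is short once the right decomposition is spotted. The one genuinely load-bearing observation is that the residual exponent $3-\phi^2$ left after extracting $d(v)^{\phi^2}$ equals $2-\phi$ \emph{precisely} because $\phi$ satisfies $x^2=x+1$, and this is what produces the target power $k^{2-\phi}$; with any other exponent the two sides would fail to line up. The only point to watch is the constant bookkeeping, namely that the $\tfrac12$ coming from the arithmetic--geometric mean step and the factor $2^{2-\phi}$ coming from the crude bound $\Delta(G)\le 2k$ combine to $2^{1-\phi}<1$, which is consistent with the hypothesis being stated with $2k$ rather than $k$.
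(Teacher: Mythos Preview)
Your argument is correct as written: the AM--GM step gives $M_2(G)\le \tfrac12\sum_v d(v)^3$, the identity $3-\phi^2=2-\phi$ lets you peel off $d(v)^{2-\phi}\le (2k)^{2-\phi}$, and the leftover constant $2^{1-\phi}<1$ finishes the job. There is nothing to repair.

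However, there is no proof in the paper to compare against. This is a survey; the theorem is simply quoted as a restatement of Theorem~1 of Knox--Mohar--Wood, and no argument is supplied here. So the comparison you were asked to anticipate is vacuous on the paper's side.

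One remark worth making, since you flag that your bound comes ``with a little room to spare'': your inequality $M_2(G)\le 2^{1-\phi}k^{2-\phi}M_{\phi^2}(G)$ is in fact sharp for $2k$-regular graphs (both sides equal $\tfrac{n}{2}(2k)^3$ versus $n(2k)^{\phi^2}$, and the ratio is exactly $2^{1-\phi}k^{2-\phi}$). This sits a constant factor $2^{1-\phi}\approx 0.652$ below the stated bound, which is in mild tension with the follow-up lemma asserting that the constant $k^{2-\phi}$ cannot be improved. That tension is an artefact of how the survey has rewritten the original Knox--Mohar--Wood statement in Zagreb-index language (note the paper's own definition of $M_2$ displays two inequivalent expressions), not a flaw in your reasoning.
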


Knox, Mohar and Wood \cite{Knox} showed by proving the next lemma (which we wrote as a topological index) that the power $\phi^2$ and the constant $k^{\phi-2}$ in the theorem \ref{thm1} cannot be improved.

\begin{lemma}
	For any natural number $k$ and $\epsilon>0$, there exists a graph $G$ with $\Delta\leq 2k$ such that
	\[
	(1+\epsilon) M_2(G)\geq k^{2-\phi}M_{\phi^2}(G).
	\]
\end{lemma}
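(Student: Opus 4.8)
The plan is to prove tightness by exhibiting an explicit extremal family rather than arguing abstractly. Inspecting how Theorem \ref{thm1} must be proved---a per-edge estimate for $d(u)d(v)$ combined with the degree cap $\Delta\le 2k$---one sees that the estimate can lose nothing only when every edge joins two vertices of the largest admissible degree $2k$. This points directly at regular graphs of degree $2k$ as the natural candidates, the simplest being $G=K_{2k+1}$ or $G=K_{2k,2k}$, each of which satisfies $\Delta(G)=2k$ as required. I would fix such a $G$ on $n$ vertices and simply evaluate the two indices on it.

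Because $G$ is regular the evaluation is immediate: every vertex contributes $(2k)^{\phi^2}$ to $M_{\phi^2}(G)$, so $M_{\phi^2}(G)=n(2k)^{\phi^2}$, while each of the $nk$ edges contributes $(2k)^2$ to $M_2(G)$, so $M_2(G)=nk(2k)^2$. Dividing, the factor $n$ cancels and
\[
\frac{M_2(G)}{M_{\phi^2}(G)}=\frac{nk(2k)^2}{n(2k)^{\phi^2}}=\tfrac12\,(2k)^{\,3-\phi^2}.
\]
The crux is now the defining relation $\phi^2=\phi+1$ (equivalently $\phi^2-\phi-1=0$), which gives $3-\phi^2=2-\phi$. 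Hence the exponent of the degree collapses to exactly $2-\phi$, so that $M_2(G)/M_{\phi^2}(G)=2^{\,1-\phi}k^{2-\phi}$. This is the one place where the golden ratio enters essentially: only because $3-\phi^2$ coincides with $2-\phi$ do the two $n$-free powers of the degree line up, so that no exponent on the vertex side other than $\phi^2$---paired with $2-\phi$ on the $k$ side---can survive along this family. (Were the family instead built recursively from the fan or wheel graphs of Section~2, Binet's Theorem~\ref{Binet} would be the tool for evaluating the resulting Fibonacci and Lucas expressions; for the regular family it is not needed.)

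From this identity I would conclude that the pair of exponents in Theorem \ref{thm1} is best possible: replacing the power $\phi^2$ of the degree by any smaller $s$, or replacing the power $2-\phi$ of $k$ by anything smaller, makes the quantity $M_2(G)/\bigl(k^{2-\phi}M_1^{s}(G)\bigr)$ grow like a positive power of $k$ along this family, and letting $k$ grow contradicts the bound. The delicate point---and the real obstacle---is the multiplicative constant. The computation pins the extremal ratio on regular graphs at exactly $2^{1-\phi}k^{2-\phi}$, so to reach the stated form $(1+\epsilon)M_2(G)\ge k^{2-\phi}M_{\phi^2}(G)$ one must either check that $2^{1-\phi}k^{2-\phi}$ is genuinely the supremum of $M_2/M_{\phi^2}$ over all graphs with $\Delta\le 2k$ (equivalently, that forcing every edge to join two degree-$2k$ vertices leaves the per-edge estimate of Theorem~\ref{thm1} with no slack), or absorb the constant and any lower-order boundary terms of a near-regular family into the factor $1+\epsilon$. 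Handling that constant correctly, rather than the index computation itself, is where the argument must be carried out with care.
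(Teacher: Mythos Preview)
The paper does not actually prove this lemma; it merely quotes it from Knox, Mohar and Wood \cite{Knox}. So there is no ``paper's proof'' to compare against, and your proposal has to be assessed on its own.

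Your computation on a $2k$-regular graph is correct as far as it goes, but it does not prove the lemma. You obtain
\[
\frac{M_2(G)}{M_{\phi^2}(G)}=2^{\,1-\phi}\,k^{2-\phi},
\]
and the constant $2^{1-\phi}\approx 0.651$ is \emph{strictly less than $1$}. Hence for a $2k$-regular $G$ the desired inequality $(1+\epsilon)M_2(G)\ge k^{2-\phi}M_{\phi^2}(G)$ is equivalent to $(1+\epsilon)\ge 2^{\phi-1}\approx 1.535$, i.e.\ it fails for every $\epsilon<2^{\phi-1}-1\approx 0.535$. The shortfall is a fixed multiplicative factor on the wrong side, so it cannot be ``absorbed into $1+\epsilon$'' as you suggest; no near-regular perturbation or boundary correction will close a constant-factor gap that is independent of the size of the graph. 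In short, regular graphs show that the \emph{exponents} $\phi^2$ and $2-\phi$ are the right ones (your collapse $3-\phi^2=2-\phi$ is exactly the point), but they are simply not extremal for the \emph{constant}, and the lemma is precisely about the constant.

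What is missing is a genuinely different family in which edges join vertices of unequal degrees in a carefully tuned way so that the per-edge estimate behind Theorem~\ref{thm1} is tight, pushing the ratio $M_2/M_{\phi^2}$ all the way up to $(1-o(1))k^{2-\phi}$ rather than $2^{1-\phi}k^{2-\phi}$. The construction in \cite{Knox} is of this non-regular type; identifying such a family (and verifying the asymptotics as the relevant parameter grows) is the actual content of the lemma, and your proposal does not supply it.
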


Huang, Shi and Xu proved the following interesting theorem for Hosoya index.

\begin{theorem}\label{main}{\rm\cite{Huang}}
For a graph $G$ of size $m$, 
	\[
	m+1 \leq Z(G) \leq \frac{1}{\sqrt{5}} \bigg[ \phi^{m+2}- (-\phi^{-1})^{m+2 }\bigg].
	\]
\end{theorem}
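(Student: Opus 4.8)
The plan is to first put the upper bound into Fibonacci form and then handle the two inequalities separately. Since $\phi^2=\phi+1$ gives $\phi(1-\phi)=-1$, we have $1-\phi=-\phi^{-1}$, so Binet's formula (Theorem~\ref{Binet}) yields
\[
\frac{1}{\sqrt 5}\Big[\phi^{m+2}-(-\phi^{-1})^{m+2}\Big]=F_{m+2}.
\]
Thus the assertion is exactly $m+1\le Z(G)\le F_{m+2}$. The workhorse throughout is the matching recurrence: for an edge $e=uv$,
\[
Z(G)=Z(G-e)+Z(G-u-v),
\]
which holds because a matching of $G$ either avoids $e$ (and is then a matching of $G-e$) or contains $e$ (and is then an edge $e$ together with a matching of the graph $G-u-v$ obtained by deleting $u$, $v$ and all incident edges).

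The lower bound is immediate from the definition $Z(G)=\sum_{k\ge 0}m(G,k)$. The two smallest terms are $m(G,0)=1$, counting the empty matching, and $m(G,1)=m$, counting the edges themselves; all remaining terms are nonnegative, so $Z(G)\ge 1+m$, with equality precisely when $G$ has no two independent edges (for instance a star or a triangle).

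For the upper bound the decisive observation is that $F_{m+2}$ is itself a Hosoya index. Applying the recurrence to an end-edge of a path gives $Z(P_n)=Z(P_{n-1})+Z(P_{n-2})$ with $Z(P_1)=1$ and $Z(P_2)=2$, whence $Z(P_n)=F_{n+1}$ and in particular $Z(P_{m+1})=F_{m+2}$. So the upper bound says that, among all connected graphs with $m$ edges, the path $P_{m+1}$ maximizes the Hosoya index. I would prove this by induction on $m$ via the recurrence above, choosing the edge $e$ carefully and bounding $Z(G-e)$ and $Z(G-u-v)$ by the inductive hypothesis. An equivalent and sharper target is the edge-wise inequality $m(G,k)\le\binom{m+1-k}{k}=m(P_{m+1},k)$ for connected $G$, which, summed over $k$ using the identity $\sum_k\binom{m+1-k}{k}=F_{m+2}$, delivers the bound.

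The hard part is that this extremal claim genuinely needs $G$ connected: for the disjoint union of $n$ edges one has $m=n$ but $Z=2^{n}$, which exceeds $F_{n+2}$ once $n\ge 2$, so the inequality fails outright without connectivity. In the induction this surfaces as the difficulty that deleting an edge, or the two endpoints of an edge, can disconnect $G$, and the naive bound $F_{m+2}$ does not survive passage to the pieces (a product of Fibonacci numbers over several components can overshoot $F_{m+2}$). The core of the argument is therefore to control these deletions: pick $e$ to be a non-cut edge when $G$ contains a cycle so that $G-e$ stays connected, treat the tree case through a pendant edge, and strengthen the inductive hypothesis to keep track of the number of components (or vertices) so that disconnected intermediate graphs are bounded correctly. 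A cleaner alternative that bypasses this bookkeeping is to invoke the classical quasi-order result that the path dominates, for every $k$, the number of $k$-matchings among connected graphs, together with the fact that $P_{m+1}$ is the connected graph with $m$ edges having the most vertices, namely $m+1$.
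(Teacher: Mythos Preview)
The paper does not supply its own proof of this theorem; it is quoted from Huang, Shi and Xu \cite{Huang} without argument, so there is nothing in the paper to compare your approach against.

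Your reformulation via Binet's formula and your proof of the lower bound are both complete and correct. For the upper bound you give only a sketch: you correctly identify the target $Z(G)\le F_{m+2}$, the recurrence $Z(G)=Z(G-e)+Z(G-u-v)$, and the extremal graph $P_{m+1}$, and you correctly diagnose the main obstruction (deletion can disconnect, and the Fibonacci bound does not behave multiplicatively over components). But you do not actually carry out either of the two strategies you propose---the strengthened induction with a careful edge choice, or the appeal to the matching quasi-order---so the upper bound remains unproved in your write-up. If you pursue the inductive route, the clean formulation is to prove the stronger statement $Z(G)\le F_{n+2}$ for connected $G$ on $n$ \emph{vertices} (since a connected graph with $m$ edges has at most $m+1$ vertices, this implies what you want), where the tree case is the classical extremality of the path and the non-tree case is handled by deleting a cycle edge.

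More importantly, you have in effect caught an omission in the statement as printed in the paper: as you observe, the upper bound is false without connectivity (two disjoint edges give $m=2$ and $Z=4>3=F_4$). The hypothesis that $G$ be connected is essential and is missing from the paper's formulation.
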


\section{The simplex graph of a graph and $\phi$}
In this section we review the relationship between the simplex graph of a graph and the golden ratio. 
Simplex graphs were introduced by Bandelt and van de Vel \cite{Band}. They  showed that the chromatic number of the underlying graph equals the minimum number $n$ such that the simplex graph can be isometrically embedded into a Cartesian product of $n$ trees.
Imrich, Klav\v zar and Mulder  also used the simplex graphs as part of their proof that testing whether a graph is triangle-free or whether it is a median graph may be performed equally quickly (\cite{Imrich}). 
Let recall the following definition: 

\begin{definition}
	 The simplex graph $\kappa(G)$ of an undirected graph $G$ is itself a graph, with one node for each clique (a set of mutually adjacent vertices) in $G$. Two vertices  of $\kappa(G)$ are linked by an edge whenever the corresponding two cliques differ in the presence or absence of a single vertex.
	\end{definition}

\begin{figure}[h]
	\begin{center}
	\includegraphics[width=0.6\textwidth]{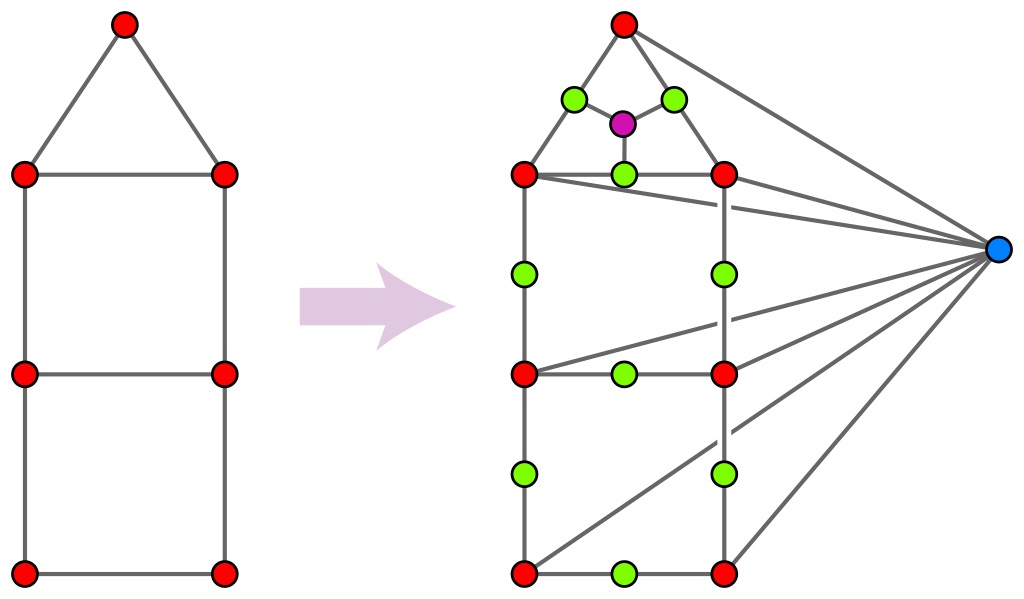}
	\caption{Graph $G$ and its simplex graph}
	\end{center} 
\end{figure}

\begin{example}
The simplex graph of the complete graph $K_n$, is a hypercube graph. In Figure \ref{Q4} you can see the cubic graph of $Q_4$.
	Also in the form of Figure \ref{cube},
	You can see graphs $Q_2$, $Q_3$ and $Q_4$.
	
\end{example}
\begin{figure}[h]
	\begin{center}
	\includegraphics[width=0.4\textwidth]{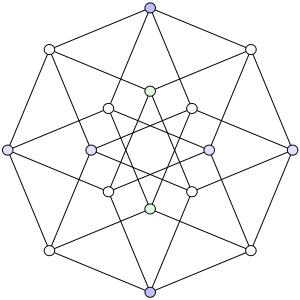}
	\caption{cubic graph $Q_4$}\label{Q4}
	\end{center} 
\end{figure}

\begin{figure}[h]
\begin{center} 
		\hspace{2cm}
	\includegraphics[width=0.6\textwidth]{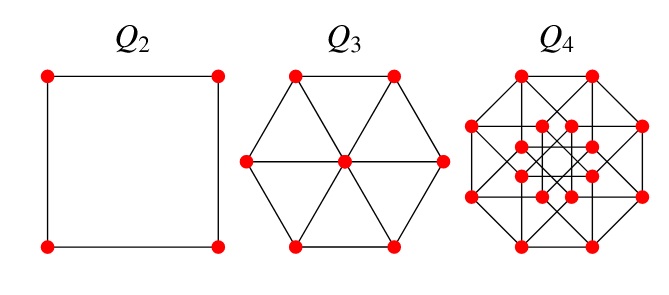}
	\caption {cubic graphs $Q_2, Q_3$ and $Q_4$.}
	\end{center} 
\end{figure}\label{cube}

\begin{theorem}
		The simplex graph of every graph is a bipartite graph.
\end{theorem}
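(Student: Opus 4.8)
The plan is to exhibit an explicit proper $2$-coloring of $\kappa(G)$, since a graph is bipartite precisely when it admits such a coloring. The natural invariant to exploit is the cardinality of each clique: I would color every vertex of $\kappa(G)$ --- that is, every clique $S$ of $G$ --- according to the parity of $|S|$, placing the cliques of even size in one part and those of odd size in the other.

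To verify that this is a valid bipartition, I would check that no edge of $\kappa(G)$ joins two cliques of the same color. By the definition of the simplex graph, two cliques $S$ and $S'$ are adjacent exactly when they differ in the presence or absence of a single vertex, i.e. $S' = S \cup \{v\}$ or $S' = S \setminus \{v\}$ for some $v \in V(G)$. In either case $\bigl| |S| - |S'| \bigr| = 1$, so $|S|$ and $|S'|$ have opposite parities and therefore receive different colors. Hence every edge of $\kappa(G)$ has its two endpoints in different parts, which is exactly the condition for $\kappa(G)$ to be bipartite.

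An equivalent and perhaps more conceptual route is to encode each clique $S$ by its indicator vector in $\{0,1\}^{V(G)}$; then adjacency in $\kappa(G)$ corresponds to a change in exactly one coordinate, so $\kappa(G)$ is a subgraph of the hypercube $Q_{|V(G)|}$. Since the hypercube is bipartite (again by the parity of the Hamming weight) and every subgraph of a bipartite graph is bipartite, the conclusion follows immediately; this also fits nicely with the earlier observation that $\kappa(K_n)$ is itself a hypercube.

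I do not anticipate a genuine obstacle here: the entire content is the single observation that the simplex-graph adjacency changes clique size by exactly one, so the parity of $|S|$ already furnishes a proper $2$-coloring. The only point requiring a moment's care is to remember that the empty set counts as a clique, and hence as a legitimate vertex of $\kappa(G)$; this causes no difficulty, since the parity argument applies uniformly to cliques of every size.
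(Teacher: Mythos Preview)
Your proposal is correct and follows essentially the same argument as the paper: both bipartition the vertices of $\kappa(G)$ according to the parity of the clique size and observe that adjacent cliques differ in size by exactly one. Your write-up is in fact more explicit than the paper's, and the hypercube-embedding remark is a pleasant bonus but not a different method.
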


	\begin{proof}
		To prove that the simplex graph is bipartite, it is enough to place the vertices that correspond to odd-sized cliques in the $X$ section, 
		and vertices corresponding to cliques of even size in section $Y$.  In this case, according to the definition of the simplex graph, no two vertices in the $X$ section (as well as in the $Y$ section) are connected, and therefore the graph is bipartite.
	\qed
	\end{proof}
	
	Like the hypercube graph, the vertices of the Fibonacci cube of order $n$ may be labeled with bitstrings of length $n$, in such a way that two vertices are adjacent whenever their labels differ in a single bit. However, in a Fibonacci cube, the only labels that are allowed are bitstrings with no two consecutive $1$ bits. If the labels of the hypercube are interpreted as binary numbers, the labels in the Fibonacci cube are a subset, the fibbinary numbers (\cite{Ard}). There are $F_{n+2}$ labels possible, where $F_n$ denotes the $n$th Fibonacci number, and therefore there are $F_{n + 2}$ vertices in the Fibonacci cube of order $n$.

	\begin{theorem} 
		The simplex graph of the complement of path graph $P_n$ is Fibonacci cube.
		\end{theorem}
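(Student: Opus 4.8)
The plan is to exhibit an explicit graph isomorphism between $\kappa(\overline{P_n})$ and the Fibonacci cube of order $n$ by encoding each clique as its characteristic bitstring. The first and decisive step is to describe exactly which vertex subsets form cliques in $\overline{P_n}$. Label the vertices of $P_n$ by $v_1,\dots,v_n$ along the path, so that $v_i$ and $v_j$ are adjacent in $P_n$ precisely when $|i-j|=1$. Passing to the complement, $v_i$ and $v_j$ are adjacent in $\overline{P_n}$ exactly when $|i-j|\neq 1$. Consequently a set $S\subseteq\{v_1,\dots,v_n\}$ is a clique of $\overline{P_n}$ if and only if it contains no pair of consecutive path vertices $v_i,v_{i+1}$; in other words, the cliques of $\overline{P_n}$ are precisely the independent sets of $P_n$.

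Next I would set up the labeling. To each clique $S$ associate the bitstring $b(S)\in\{0,1\}^n$ whose $i$-th coordinate equals $1$ if and only if $v_i\in S$. Under this correspondence the clique condition from the previous paragraph translates directly into the rule that $b(S)$ contains no two consecutive $1$ bits, which is exactly the fibbinary constraint defining the admissible labels of the Fibonacci cube of order $n$. Thus $S\mapsto b(S)$ is a bijection from the vertex set of $\kappa(\overline{P_n})$ onto the vertex set of the Fibonacci cube. As a consistency check, both sets have cardinality $F_{n+2}$: the former because the number of independent sets of $P_n$ is $F_{n+2}$, and the latter as recorded in the excerpt.

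It then remains to verify that this bijection preserves edges in both directions. By the definition of the simplex graph, two cliques are adjacent in $\kappa(\overline{P_n})$ exactly when they differ in the presence or absence of a single vertex, that is, when their characteristic bitstrings differ in precisely one coordinate. This is identical to the adjacency rule of the Fibonacci cube, in which two admissible labels are joined whenever they differ in a single bit. Hence $b$ carries edges to edges and non-edges to non-edges, so it is a graph isomorphism and $\kappa(\overline{P_n})$ is the Fibonacci cube of order $n$.

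The argument is short once the key identification is in place, and I expect the only genuine content to lie in the first step: recognizing that complementation turns the path adjacency $|i-j|=1$ into $|i-j|\neq 1$, so that the cliques of $\overline{P_n}$ coincide with the independent sets of $P_n$, and hence with the fibbinary strings. After that, the matching of the two adjacency rules---single-vertex symmetric difference versus single-bit flip---is immediate, and no computation beyond counting the independent sets of a path is required.
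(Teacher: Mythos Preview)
Your argument is correct. The paper, however, states this theorem without proof: immediately after the statement it moves on to iterated simplex graphs $\kappa^n(G)$, offering no argument at all. So there is nothing to compare against beyond the paper's preceding description of the Fibonacci cube (bitstrings of length $n$ with no two consecutive $1$'s, adjacent when they differ in a single bit), which is exactly the characterization you exploit. Your identification of cliques of $\overline{P_n}$ with independent sets of $P_n$, the encoding by characteristic bitstrings, and the matching of the single-vertex symmetric-difference adjacency with the single-bit-flip adjacency together give a clean and complete proof that the paper omits.
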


	It is clear that the simplex graph of  $\kappa(G)$, can be denoted by the symbol $\kappa^2(G)$ and inductively the $n$-simplex graph $G$ can be denoted by the symbol $\kappa^n(G)$ and defined as $\kappa^n(G)=\kappa(\kappa^{n-1}(G))$. According to the structure of this graph, we can find recurrence relations for the number of vertices and edges $\kappa^n(G)$. For convenience, we set $v_n=|V(\kappa^{n}(G))|$ and $e_n=|E(\kappa^{n}(G))|$. Note that $v_0=|V(G)|$ and $e_0=|E(G)|$. We have the following return relations:
	
	\begin{theorem}\label{rec}
		For each $n\geq 2$ we have:
		\begin{enumerate}
			\item[]
			$v_n=3v_{n-1}-v_{n-2}-1.$
			\item[]
			$e_n=3e_{n-1}-e_{n-2}+1.$
		\end{enumerate}
	\end{theorem}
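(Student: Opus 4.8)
The plan is to exploit the fact, established just above, that every simplex graph is bipartite. Since $\kappa^{m}(G)$ is a simplex graph for every $m\ge 1$, it is bipartite and hence triangle-free, so the only cliques it contains are the empty set, its singletons and its edges. I would use this to set up two first-order identities linking consecutive terms, and then decouple the pair $(v_n,e_n)$ to obtain the two stated recurrences.

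First I would count the vertices and edges of $\kappa(H)$ for a bipartite graph $H$. The vertices of $\kappa(H)$ are the cliques of $H$, namely the empty clique, the $|V(H)|$ singletons and the $|E(H)|$ edges, so $|V(\kappa(H))| = 1 + |V(H)| + |E(H)|$. For the edges of $\kappa(H)$, two cliques are joined when they differ in a single vertex: the empty clique is joined to each singleton, and each edge $\{u,w\}$ of $H$ is joined to exactly the two singletons $\{u\}$ and $\{w\}$, while no two singletons and no two edges can differ by one vertex. Hence $|E(\kappa(H))| = |V(H)| + 2|E(H)|$. Taking $H=\kappa^{n-1}(G)$, which is bipartite for $n\ge 2$, gives the coupled relations
\begin{align*}
v_n &= 1 + v_{n-1} + e_{n-1}, \\
e_n &= v_{n-1} + 2e_{n-1}.
\end{align*}

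To reach the second-order recurrences I would decouple this system. Writing $\mathbf{x}_n=(v_n,e_n)^{\mathsf T}$, the relations read $\mathbf{x}_n=M\mathbf{x}_{n-1}+\mathbf{c}$ with $M=\left(\begin{smallmatrix}1 & 1\\ 1 & 2\end{smallmatrix}\right)$ and $\mathbf{c}=(1,0)^{\mathsf T}$. Since $\operatorname{tr}M=3$ and $\det M=1$, the Cayley--Hamilton theorem gives $M^2=3M-I$; that is, the characteristic polynomial is $\lambda^2-3\lambda+1$, whose roots are $\phi^{2}$ and $(1-\phi)^{2}$, which is exactly where the golden ratio enters. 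Substituting $\mathbf{x}_{n-1}=M\mathbf{x}_{n-2}+\mathbf{c}$ into $\mathbf{x}_n=M\mathbf{x}_{n-1}+\mathbf{c}$ and applying $M^2=3M-I$ yields $\mathbf{x}_n=3\mathbf{x}_{n-1}-\mathbf{x}_{n-2}+(M\mathbf{c}-2\mathbf{c})$; since $M\mathbf{c}-2\mathbf{c}=(-1,1)^{\mathsf T}$, this is precisely $v_n=3v_{n-1}-v_{n-2}-1$ and $e_n=3e_{n-1}-e_{n-2}+1$. (Equivalently, one may simply substitute the first-order identities at index $n-1$ into those at index $n$ and eliminate $e$, respectively $v$, by hand.)

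The main obstacle is the base of the range. The decoupling step uses the coupled relations at both indices $n$ and $n-1$, and the relation at index $n-1$ needs $\kappa^{n-2}(G)$ to be bipartite, which holds only for $n-2\ge 1$; so the argument above is clean for $n\ge 3$. The delicate case is $n=2$, which ties $v_2,v_1,v_0$ (and $e_2,e_1,e_0$) back to $\kappa^{0}(G)=G$ itself, where triangle-freeness is no longer available. Writing $c_k$ for the number of $k$-cliques of $G$, one has $v_1=1+\sum_{k\ge 1}c_k$ and $e_1=\sum_{k\ge 1}k\,c_k$, and $v_2=3v_1-v_0-1$ reduces to $\sum_{k\ge 3}(k-2)c_k=0$. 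I would therefore isolate $n=2$ and verify it directly, observing that it holds exactly when $G$ has no clique of size $\ge 3$ (in particular when $G$ is triangle-free), while for arbitrary $G$ the recurrences hold for all $n\ge 3$.
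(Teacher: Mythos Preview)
Your route is exactly the paper's: establish the coupled first-order relations $v_n=1+v_{n-1}+e_{n-1}$ and $e_n=v_{n-1}+2e_{n-1}$ from triangle-freeness of $\kappa^{n-1}(G)$, then eliminate to reach the second-order recurrences. The paper simply writes the two first-order identities and says the rest follows ``easily''; your Cayley--Hamilton packaging (with $M=\left(\begin{smallmatrix}1&1\\1&2\end{smallmatrix}\right)$, $\operatorname{tr}M=3$, $\det M=1$) is a pleasant way to do that elimination and to see at once why the characteristic roots are $\phi^{2}$ and $(1-\phi)^{2}$.

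Where you go beyond the paper is the base case, and you are right. The decoupling uses the first-order relations at index $n-1$, which in turn need $\kappa^{n-2}(G)$ to be triangle-free; this is guaranteed only for $n\ge 3$. Your reduction of the $n=2$ case to $\sum_{k\ge 3}(k-2)c_k=0$ is correct, and a single example settles it: for $G=K_3$ one has $v_0=3$, $v_1=8$ (since $\kappa(K_3)=Q_3$), $e_1=12$, hence $v_2=1+v_1+e_1=21$, while $3v_1-v_0-1=20$. So the theorem as stated fails at $n=2$ for graphs with triangles; the paper's proof glosses over this. In practice this does no harm downstream, since the closed form that follows is fitted to the initial data $v_1,v_2$ (and $e_1,e_2$) and only uses the recurrence for $n\ge 3$; but your caveat that the clean range is $n\ge 3$ for arbitrary $G$ (or $n\ge 2$ when $G$ is triangle-free) is the accurate statement.
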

	\begin{proof}
		
		First, note that according to the definition of the simplex graph, the largest clique in $\kappa(G)$ is equal to $2$-clique, or $K_2$. Since the number of cliques in $\kappa(G)$ equals 
		$|V(\kappa(G))|+|E(\kappa(G))|+1$
		or is $v_1+e_1+1$, it can be seen by induction that
		\[
		v_n=v_{n-1}+e_{n-1}+1.
		\]
		And according to the structure of the graph $\kappa(G)$ we have:
		\[
		e_n=2e_{n-1}+v_{n-1}.
		\]
		By using these two relationships, both relationships can be easily obtained. \qed 
	\end{proof}
	
	Recursive relations in the Theorem \ref{rec}  are the second-order recursive relations, whose general answer is as follows:
	\begin{eqnarray}
		v_n=c_1(1+\phi)^n+c_2(2-\phi)^n+1.
	\end{eqnarray}
	\begin{eqnarray}
		e_n=d_1(1+\phi)^n+d_2(2-\phi)^n-1.
	\end{eqnarray}
	By inserting the initial conditions, the following answers are obtained:
	
	{
		\begin{eqnarray*}
			v_n&=&\frac{\big(v_2+1-\phi-v_1(2-\phi)\big)}{\sqrt{5}}(1+\phi)^{n-1}+
			\frac{\big(v_1(1+\phi)-v_2-\phi)\big)}{\sqrt{5}}(2-\phi)^{n-1}+1.
		\end{eqnarray*} }
		
		{
			\begin{eqnarray*}
				e_n&=&\frac{\big(e_2-(1-\phi)-e_1(2-\phi)\big)}{\sqrt{5}}(1+\phi)^{n-1}+
				\frac{\big(e_1(1+\phi)+e_2\phi)\big)}{\sqrt{5}}(2-\phi)^{n-1}-1.
			\end{eqnarray*} }
			Since $0<2-\phi<1$ for very large $n$ the power of this number
			 tends to zero and therefore the ratio $\frac{e_n}{v_n}$ for large $n$ is equal to the following fraction:
			\[
			\frac{e_2- (1-\phi)-e_1(2-\phi)}{v_2+1-\phi-v_1(2-\phi)}.
			\]
			By putting the relations $v_2=v_1+e_1+1$ and $e_2=2e_1+v_1$, the above ratio becomes as follows:
			\[
			\frac{v_1+e_1\phi- (1-\phi)}{e_1+2-\phi-v_1(1-\phi)}.
			\]
			If we multiply the denominator of this fraction in $\phi$, its numerator is obtained. Therefore, we will have the following theorem:
			
\begin{theorem}
	For any graph $G$, 
	\[
	{\lim}_{n\rightarrow \infty}\frac{|E(k^{n}(G))|}{|V(k^{n}(G))|}=\phi.
	\]
\end{theorem}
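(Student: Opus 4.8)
The plan is to read off the limit directly from the explicit closed forms for $v_n=|V(\kappa^n(G))|$ and $e_n=|E(\kappa^n(G))|$ displayed above, each of which is a linear combination of $(1+\phi)^{n-1}$ and $(2-\phi)^{n-1}$ together with an additive constant. The two bases are exactly the roots of the characteristic polynomial $x^2-3x+1$ of the homogeneous part of the recurrences in Theorem~\ref{rec}, and the first thing I would record is the size comparison $1+\phi=\phi^2>1$ versus $0<2-\phi<1$. Consequently, in both sequences the term carrying $(1+\phi)^{n-1}$ grows without bound and dominates: the $(2-\phi)^{n-1}$ contribution tends to $0$ and the constants $\pm1$ are negligible next to an exponentially large quantity.

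Next I would form the quotient $e_n/v_n$ and divide numerator and denominator by $(1+\phi)^{n-1}$. Passing to the limit and using $(2-\phi)^{n-1}\to0$ and $(1+\phi)^{-(n-1)}\to0$ kills every term except the two leading coefficients, so
\[
\lim_{n\to\infty}\frac{e_n}{v_n}=\frac{e_2-(1-\phi)-e_1(2-\phi)}{v_2+1-\phi-v_1(2-\phi)}.
\]
Feeding in the base relations $v_2=v_1+e_1+1$ and $e_2=2e_1+v_1$ recorded in the proof of Theorem~\ref{rec} rewrites this as the graph-dependent fraction
\[
\frac{v_1+e_1\phi-(1-\phi)}{e_1+2-\phi-v_1(1-\phi)}.
\]

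The decisive and, I expect, only substantial step is to check that this fraction equals $\phi$ for \emph{every} graph, i.e.\ that multiplying its denominator by $\phi$ returns its numerator. Expanding $\phi\bigl(e_1+2-\phi-v_1(1-\phi)\bigr)$ and replacing each $\phi^2$ by $\phi+1$ (the defining relation of the golden ratio) collapses the expression precisely to $v_1+e_1\phi-(1-\phi)$, with the graph parameters $v_1,e_1$ cancelling in matching pairs; this is why the limit comes out independent of $G$. To make the passage to the limit legitimate I would also note that the denominator $e_1+2-\phi-v_1(1-\phi)=e_1+(2-\phi)+v_1(\phi-1)$ is a sum of nonnegative terms with $2-\phi>0$, hence strictly positive, which certifies that the leading coefficient of $v_n$ does not vanish, so $v_n\to\infty$ and the quotient is eventually well defined. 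As a conceptual cross-check, the same value is forced by the recurrences themselves: if $r_n=e_n/v_n$ converges to $L$, then dividing $e_n=2e_{n-1}+v_{n-1}$ by $v_n=v_{n-1}+e_{n-1}+1$ and using $v_n\to\infty$ gives $L=(2L+1)/(L+1)$, i.e.\ $L^2-L-1=0$, whose positive root is $\phi$. The genuine obstacle is thus not the algebra but confirming that the dominant $(1+\phi)^{n-1}$ coefficient is nonzero; once that is in hand, the identity $\phi^2=\phi+1$ finishes the proof.
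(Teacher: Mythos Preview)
Your argument is correct and follows essentially the same route as the paper: extract the dominant $(1+\phi)^{n-1}$ coefficients from the closed forms, form their ratio, substitute $v_2=v_1+e_1+1$ and $e_2=2e_1+v_1$, and verify via $\phi^2=\phi+1$ that $\phi$ times the denominator equals the numerator. Your additional checks (positivity of the leading coefficient in $v_n$ and the fixed-point sanity check $L=(2L+1)/(L+1)$) go slightly beyond what the paper spells out but do not change the underlying strategy.
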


\section{Energy of graphs and $\phi$}
If $A$ is the adjacency matrix of $G$, then
the eigenvalues of $A$, $\lambda_1\geq \lambda_2\geq ...\geq \lambda_n$ are said to be the eigenvalues of
the graph $G$. These are the roots of the characteristic polynomial $\phi(G,\lambda)=\prod_{i=1}^n(\lambda-\lambda_i)$.
It is easy to see that the golden ratio $\phi$ can be eigenvalue of graph. For example the eigenvalues of graph $P_4$  are $\pm\phi, \pm\phi^{-1}$.  The graph $P_4$ is the smallest graph of the form $H\circ K_2$, where $H=K_2$ and $\circ$ is the corona product. For more information on eigenvalues of corona of two graphs see \cite{SIAM}.  

The energy of the graph $G$ is defined as $E=E(G)=\sum_{i=1}^n |\lambda_i|$. This
definition was put forward by I. Gutman \cite{Gutman} and was motivated by earlier results in theoretical chemistry \cite{Pol}. 

In 2004 Bapat and Pati \cite{Bapat} obtained the following result:
\begin{theorem} 
 The energy of a graph cannot be an odd integer.
 \end{theorem}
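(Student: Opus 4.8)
The plan is to exploit the arithmetic nature of the eigenvalues together with the vanishing of the trace of the adjacency matrix. First I would record that, for a simple undirected graph $G$, the adjacency matrix $A$ is a symmetric $0$--$1$ matrix with zero diagonal; hence all eigenvalues $\lambda_1 \geq \cdots \geq \lambda_n$ are real, and the characteristic polynomial $\det(\lambda I - A)=\prod_{i=1}^n(\lambda-\lambda_i)$ is monic with integer coefficients. Consequently every $\lambda_i$ is a real algebraic integer.

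The next step is to rewrite the energy as twice a single algebraic quantity. Since the diagonal of $A$ is zero, $\mathrm{tr}(A)=\sum_{i=1}^n \lambda_i = 0$. Separating the positive and negative eigenvalues and setting $S=\sum_{\lambda_i>0}\lambda_i$, the relation $\sum_i\lambda_i=0$ forces the negative eigenvalues to sum to $-S$, so that
\[
E(G)=\sum_{i=1}^n|\lambda_i|=\sum_{\lambda_i>0}\lambda_i-\sum_{\lambda_i<0}\lambda_i=2S.
\]

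Now comes the decisive observation: $S$ is a finite sum of positive eigenvalues, each of which is an algebraic integer, and since the algebraic integers form a ring (in particular closed under finite sums), $S$ is itself an algebraic integer. It is worth stressing that we do \emph{not} need the set of positive eigenvalues to be closed under Galois conjugation for this conclusion; closure of the ring of algebraic integers under addition is all that is required.

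Finally, suppose toward a contradiction that $E(G)$ is an odd integer. Then $S=E(G)/2$ is a rational number that is not an integer. But a rational number which is also an algebraic integer must lie in $\mathbb{Z}$, because $\mathbb{Z}$ is integrally closed in $\mathbb{Q}$ (equivalently, by the rational root theorem applied to a monic integer polynomial annihilating it). This contradicts $S\notin\mathbb{Z}$, so $E(G)$ cannot be an odd integer. I expect the main obstacle to be purely conceptual rather than computational: recognizing that the partial sum taken over \emph{only} the positive eigenvalues remains an algebraic integer, and coupling this with the trace identity $E(G)=2S$. Once these two facts are in place, the contradiction is immediate.
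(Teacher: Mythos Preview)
Your argument is correct and complete: the key ingredients are (i) the eigenvalues of $A(G)$ are real algebraic integers, (ii) $\mathrm{tr}\,A(G)=0$ gives $E(G)=2S$ with $S=\sum_{\lambda_i>0}\lambda_i$, and (iii) $S$ is an algebraic integer, so if $E(G)$ were odd then $S=E(G)/2$ would be a non-integral rational algebraic integer, contradicting the integral closedness of $\mathbb{Z}$ in $\mathbb{Q}$.

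As for comparison with the paper: the paper does \emph{not} supply its own proof of this statement. It merely records the theorem and attributes it to Bapat and Pati (2004). Your proof is in fact essentially the original Bapat--Pati argument, so there is no genuine methodological divergence to discuss; you have reconstructed the standard (and, to my knowledge, only) proof of this fact.
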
 
In 2008 Pirzada and Gutman communicated an interesting result:
\begin{theorem}{\rm\cite{Pir}}
 The energy of a graph cannot be the square root of an odd integer.
 \end{theorem}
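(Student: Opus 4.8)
The plan is to exploit two facts that the set-up already hands us: the adjacency matrix $A$ is real symmetric with zero diagonal, and the characteristic polynomial $\prod_{i=1}^n(\lambda-\lambda_i)$ is monic with integer coefficients. The second fact tells us that every eigenvalue $\lambda_i$ is an algebraic integer, and since the algebraic integers form a ring, any finite sum of eigenvalues is again an algebraic integer. This ring structure, not any analytic estimate, is what I would build the proof on.

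First I would get rid of the absolute values. Because $A$ has zero trace, $\sum_{i=1}^n\lambda_i=0$, so the positive eigenvalues and the negative eigenvalues have sums of equal magnitude. Writing $P=\sum_{\lambda_i>0}\lambda_i$, this yields
\[
E(G)=\sum_{i=1}^n|\lambda_i|=\sum_{\lambda_i>0}\lambda_i-\sum_{\lambda_i<0}\lambda_i=2P .
\]
The value of this rewriting is that $P$, being a sum of some of the $\lambda_i$, is an algebraic integer, so $E(G)/2$ is an algebraic integer even though the raw sum of absolute values is not visibly one.

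Now I would suppose, for contradiction, that $E(G)^2=m$ for some odd integer $m$. Squaring the identity above gives $P^2=E(G)^2/4=m/4$. The left-hand side is an algebraic integer (a product of algebraic integers), while the right-hand side is rational; since $\mathbb{Z}$ is integrally closed in $\mathbb{Q}$, a rational algebraic integer must be an ordinary integer. Hence $m/4\in\mathbb{Z}$, i.e. $4\mid m$, which is impossible for odd $m$. This contradiction closes the argument, and it simultaneously reproves the Bapat--Pati statement, since an odd value of $E(G)$ would force $E(G)/2=P$ to be a non-integral rational algebraic integer.

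The main obstacle is conceptual rather than computational: absolute value is not a ring operation, so $\sum_i|\lambda_i|$ is not manifestly an algebraic integer and one cannot attack $E(G)$ through its minimal polynomial directly. The decisive move is the trace-zero identity $E(G)=2P$, which trades the analytically defined sum of absolute values for the arithmetically transparent sum $P$ of positive eigenvalues; after that, the only external ingredient is the integral closedness of $\mathbb{Z}$ in $\mathbb{Q}$. The one point worth double-checking is that $P$ is genuinely an algebraic integer despite being singled out by the signs of the $\lambda_i$: it is, because closure of the ring of algebraic integers under addition requires only that $P$ be a finite sum of algebraic integers, with no Galois-invariance needed.
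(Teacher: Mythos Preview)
The paper does not actually supply a proof of this theorem; it merely quotes the result with a citation to Pirzada and Gutman, so there is no in-paper argument to compare against. Your proof is correct and is, in fact, essentially the original Pirzada--Gutman argument: use $\operatorname{tr}(A)=0$ to write $E(G)=2P$ with $P=\sum_{\lambda_i>0}\lambda_i$ an algebraic integer, then observe that $E(G)=\sqrt{m}$ would force the rational number $m/4=P^2$ to be an algebraic integer, hence an ordinary integer, contradicting $m$ odd. Your final paragraph correctly pre-empts the only real worry, namely that selecting eigenvalues by sign does not spoil the algebraic-integer property since closure under addition in the ring of algebraic integers needs no Galois symmetry.
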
 
 
 In 2010, Alikhani and Iranmanesh proved the following result. 
 \begin{theorem}{\rm\cite{IJMSI}}
 The golden ratio cannot be the energy of a graph.
 \end{theorem}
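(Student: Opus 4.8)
The plan is to argue directly, since the golden ratio is covered by neither of the two preceding theorems: $\phi=\frac{1+\sqrt5}{2}$ is irrational, so it is not an odd integer, and it is not the square root of any integer at all (if $\phi=\sqrt{k}$ then $\phi^2=k$ would be an integer, whereas $\phi^2=\phi+1$ is irrational), so in particular it is not the square root of an odd integer. A genuinely new obstruction is therefore required, and I would extract it from the number of edges.

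First I would record the two standard trace identities for the adjacency matrix $A(G)$ of a graph $G$ with $n$ vertices and $m$ edges: since the diagonal of $A(G)$ vanishes, $\sum_{i=1}^n\lambda_i=\mathrm{tr}\,A(G)=0$, and counting closed walks of length two gives $\sum_{i=1}^n\lambda_i^2=\mathrm{tr}\,A(G)^2=2m$. The key inequality is then
\[
E(G)^2=\Big(\sum_{i=1}^n|\lambda_i|\Big)^2=\sum_{i=1}^n\lambda_i^2+2\!\!\sum_{1\le i<j\le n}\!\!|\lambda_i|\,|\lambda_j|\ \ge\ \sum_{i=1}^n\lambda_i^2=2m,
\]
so that every graph satisfies $2m\le E(G)^2$.

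Next I would substitute the hypothetical value $E(G)=\phi$. Using $\phi^2=\phi+1=\frac{3+\sqrt5}{2}<3$, the inequality $2m\le E(G)^2$ forces $2m<3$, that is $m\le 1$. It then remains only to dispose of the two surviving cases. If $m=0$ the graph is edgeless, all eigenvalues are zero, and $E(G)=0\ne\phi$. If $m=1$ the graph consists of a single edge together with isolated vertices, whose eigenvalues are $1,-1$ and zeros, so $E(G)=2\ne\phi$. In either case $E(G)\ne\phi$, which completes the argument.

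There is no serious obstacle once the right quantity is isolated; the only thing requiring care is the bookkeeping in the boundary cases. The one genuine idea is to convert the analytic-looking statement $E(G)=\phi$ into the purely combinatorial constraint $2m\le\phi^2<3$ by means of the identity $\mathrm{tr}\,A(G)^2=2m$, after which the finitely many small graphs are checked by hand. Equivalently, the whole proof can be phrased as the observation that no graph has energy strictly between $0$ and $2$, while $0<\phi<2$; I would include this reformulation as a sanity check on the computation.
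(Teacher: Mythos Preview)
Your argument is correct. Note, however, that the present paper is a survey and does not actually include a proof of this theorem; it only cites the result from \cite{IJMSI}, so there is no in-paper proof to compare against directly.

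For context, the approach in the cited source is algebraic rather than metric. Writing $P=\sum_{\lambda_i>0}\lambda_i$, the trace condition $\sum_i\lambda_i=0$ gives $E(G)=2P$, and $P$ is an algebraic integer, being a finite sum of eigenvalues of an integer matrix. Hence $E(G)/2$ must always be an algebraic integer, whereas $\phi/2=(1+\sqrt5)/4$ has minimal polynomial $4x^{2}-2x-1$ and is not one. That line of reasoning gives a uniform obstruction that simultaneously recovers the Bapat--Pati and Pirzada--Gutman theorems and excludes further values such as the powers $\phi^{n}$, at the cost of invoking a little number theory. Your route is more elementary --- only the identity $\sum_i\lambda_i^{2}=2m$ and a two-case check --- at the price of being tailored to the single value $\phi$ (or, as you observe at the end, to any target in the open interval $(0,2)$).
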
 

\section{Chromatic roots and domination roots and $\phi$} 

Let $G=(V,E)$ be a simple graph and $\lambda \in \mathbb{N}$.
A mapping $f\colon V\rightarrow \{1,2,\ldots,\lambda\}$ is called a {\it $\lambda$-colouring} of $G$,
if $f(u)\neq f(v)$ whenever the vertices $u$ and $v$ are adjacent in $G$.
The number of distinct $\lambda$-colourings of $G$, denoted by $P(G,\lambda)$
is called the {\it chromatic polynomial} of $G$.
A root of $P(G,\lambda)$ is called a
{\it chromatic root} of $G$.
An interval is called a {\it root-free interval}
for a chromatic polynomial $P(G,\lambda)$, if $G$ has no chromatic root in this interval.
It is well-known that $(-\infty,0)$ and $(0,1)$ are two maximal root-free intervals for
the family of all graphs (see~\cite{jackson1}). Jackson \cite{jackson1} showed that
$(1,\frac{32}{27}]$ is another maximal root-free interval for the family of all graphs and
the value $\frac{32}{27}$ is best possible. The chromatic polynomial, chromaticity of graphs and chromatic zeros studied well in 
\cite{Dong}.

 Tutte \cite{tutte1} proved that $\frac{3+\sqrt{5}}{2}=1+\phi=\phi^2$  cannot be a chromatic root. Alikhani and Peng proved the following result: 
 
 \begin{theorem}{\rm\cite{AADM1}}\label{pn}
  $\phi^{n} (n\in \mathbb{N})$ cannot be  roots of any chromatic polynomials.
\end{theorem}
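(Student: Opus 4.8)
The plan is to exploit the algebraic conjugate of $\phi^n$ together with the maximal root-free intervals $(-\infty,0)$ and $(0,1)$ recorded above. The starting point is that every chromatic polynomial $P(G,\lambda)$ lies in $\mathbb{Z}[\lambda]$, whereas $\phi$ is a quadratic irrational, the root of $x^2-x-1=0$. Consequently $\phi^n$ is again a quadratic irrational for every $n\geq 1$, and if it were a chromatic root then its algebraic conjugate would be forced to be a chromatic root as well. So the first thing I would do is pin down the minimal polynomial of $\phi^n$ over $\mathbb{Q}$ and then locate its second root on the real line.

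First I would identify the conjugate. Writing $\beta=1-\phi$ for the second root of $x^2-x-1=0$, one computes $\phi\beta=\phi-\phi^2=-1$, and by Theorem \ref{Binet} one has $\phi^n+\beta^n=L_n$. Hence $\phi^n$ and $\beta^n$ are precisely the two roots of the monic quadratic
\[
q(x)=x^2-L_n x+(-1)^n,
\]
which has integer coefficients. Since $\phi^n=F_n\phi+F_{n-1}$ with $F_n\neq 0$ for $n\geq 1$, the number $\phi^n$ is irrational, so the degree-$2$ polynomial $q$ is irreducible over $\mathbb{Q}$ and is therefore the minimal polynomial of $\phi^n$. If $P(G,\lambda)$ had $\phi^n$ as a root, then $q$ would divide $P(G,\lambda)$ in $\mathbb{Q}[\lambda]$, and hence the second root $\beta^n=(1-\phi)^n$ of $q$ would also be a chromatic root of $G$.

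Next I would locate $\beta^n$ and invoke the root-free intervals to close the argument. Because $|\beta|=|1-\phi|=\phi^{-1}<1$ and $\beta<0$, the powers lie in $(-1,1)\setminus\{0\}$, with $\beta^n\in(0,1)$ when $n$ is even and $\beta^n\in(-1,0)\subseteq(-\infty,0)$ when $n$ is odd. In either case $\beta^n$ falls strictly inside one of the maximal root-free intervals for the family of all graphs, and $\beta^n\neq 0,1$, which contradicts $\beta^n$ being a chromatic root. This contradiction shows that no $\phi^n$ with $n\geq 1$ can be a chromatic root.

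The step I expect to be the crux is the passage from ``$\phi^n$ is a chromatic root'' to ``$\beta^n$ is a chromatic root'': it rests on $q$ being genuinely the minimal polynomial (this is exactly why the case $n=0$, where $\phi^0=1$ is rational and indeed a chromatic root, must be excluded) and on the divisibility of the integer polynomial $P(G,\lambda)$ by $q$. Once that structural fact is secured, the remaining work---the conjugate identities and the elementary sign-and-magnitude estimate placing $\beta^n$ inside the root-free intervals---is routine given Theorem \ref{Binet}.
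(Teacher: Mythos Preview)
The survey does not supply its own proof of this theorem; it simply quotes the result from \cite{AADM1} after recording the root-free intervals $(-\infty,0)$ and $(0,1)$. Your argument is correct and is exactly the proof given in \cite{AADM1}: identify the algebraic conjugate $(1-\phi)^n$ of $\phi^n$ via the minimal polynomial $x^2-L_nx+(-1)^n$, note that integrality of $P(G,\lambda)$ forces the conjugate to be a chromatic root as well, and then observe that $(1-\phi)^n$ lands in $(0,1)$ for even $n$ and in $(-\infty,0)$ for odd $n$, both root-free.
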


We need the following theorems:
\begin{theorem} {\rm\cite{Root}}\label{plus}
 If $\alpha$  is a chromatic root, then for any natural number $n$, $\alpha+n$ is a chromatic root.
\end{theorem}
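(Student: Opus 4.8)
The plan is to exhibit $\alpha + n$ as a chromatic root of an explicitly constructed graph. If $G$ is a graph having $\alpha$ as a chromatic root, the natural construction is the join $G \vee K_n$: take $G$ together with a disjoint clique $K_n$ on $n$ new vertices, and add every edge between a vertex of $G$ and a vertex of $K_n$. Attaching a universal clique should shift every colouring by $n$ colours, and I expect this to move the root $\alpha$ to $\alpha + n$.

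First I would establish the key formula for the chromatic polynomial of this join, namely
\[
P(G \vee K_n, \lambda) = \lambda(\lambda-1)\cdots(\lambda-n+1)\, P(G, \lambda - n).
\]
The justification is a direct counting argument. In any proper $\lambda$-colouring of $G \vee K_n$, the $n$ mutually adjacent vertices of $K_n$ must receive $n$ distinct colours, which can be done in $\lambda(\lambda-1)\cdots(\lambda-n+1)$ ways. Since every vertex of $K_n$ is adjacent to every vertex of $G$, the induced subgraph $G$ must then be properly coloured using only the $\lambda - n$ colours \emph{not} used on $K_n$, which contributes the factor $P(G, \lambda-n)$. Multiplying gives the identity for every positive integer $\lambda$; as both sides are polynomials agreeing at infinitely many points, the equality holds as an identity of polynomials.

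With the formula in hand, the conclusion is immediate: substitute $\lambda = \alpha + n$. Because $\alpha$ is a chromatic root of $G$ we have $P(G, (\alpha+n) - n) = P(G, \alpha) = 0$, so the right-hand side vanishes and $P(G \vee K_n, \alpha+n) = 0$. Hence $\alpha + n$ is a chromatic root, namely of $G \vee K_n$. An equivalent route is to prove the $n = 1$ case first, where adjoining a single universal vertex yields $P(G \vee K_1, \lambda) = \lambda\, P(G, \lambda-1)$, so that $\alpha$ being a root forces $\alpha + 1$ to be a root, and then to iterate this step $n$ times.

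The hard part is not conceptual but lies in the careful justification of the join formula: one must verify that, once the clique is coloured, the $n$ forbidden colours on $K_n$ are \emph{simultaneously} forbidden on every vertex of $G$, so that $G$ genuinely sees exactly $\lambda - n$ available colours and contributes $P(G,\lambda - n)$ rather than some other shift. Once this colour shift is accounted for correctly, the theorem follows purely formally from the substitution $\lambda \mapsto \alpha + n$.
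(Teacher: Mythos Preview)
Your argument is correct: the chromatic polynomial of the join satisfies $P(G\vee K_n,\lambda)=\lambda(\lambda-1)\cdots(\lambda-n+1)\,P(G,\lambda-n)$, and substituting $\lambda=\alpha+n$ immediately gives the result. Note, however, that the paper does not supply its own proof of this theorem; it is quoted from \cite{Root} as a known result, so there is no in-paper argument to compare against. Your proof via the join with $K_n$ is the standard one and would serve perfectly well as the missing justification.
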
 
\begin{theorem} {\rm\cite{Root}}\label{three}
$\phi+3$ is a chromatic root.
\end{theorem}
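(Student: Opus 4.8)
The plan is to use Theorem \ref{plus} to collapse the statement to a single \emph{base} realization and then to manufacture that base graph. Since $\phi+3=(\phi+2)+1$, Theorem \ref{plus} with $n=1$ shows that $\phi+3$ is a chromatic root as soon as $\phi+2$ is one, so the whole problem reduces to exhibiting a graph $H$ with $P(H,\phi+2)=0$. First I would record why $\phi+2$ is the only sensible base: shifting $\phi+3$ down by $2$ or by $3$ lands on $\phi+1=\phi^2$ and on $\phi$, and neither is a chromatic root (the former by Tutte, the latter by Theorem \ref{pn}); concretely their Galois conjugates $2-\phi=\frac{3-\sqrt5}{2}$ and $1-\phi=\frac{1-\sqrt5}{2}$ fall into the root-free intervals $(0,1)$ and $(-\infty,0)$. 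Thus the shift from $\phi+2$ is forced and all the content lies in the base case.

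For the base case I would exploit the fact that a chromatic polynomial has integer coefficients, so its roots occur in complete Galois-conjugate sets. The minimal polynomial of $\phi+2=\frac{5+\sqrt5}{2}$ over $\mathbb{Q}$ is
\[
\bigl(x-(\phi+2)\bigr)\bigl(x-(3-\phi)\bigr)=x^2-5x+5,
\]
and the conjugate $3-\phi=\frac{5-\sqrt5}{2}\approx 1.382$ lies \emph{outside} every known root-free interval (it exceeds $\tfrac{32}{27}$). Hence there is no conjugate obstruction, and the task becomes the concrete one of finding a graph $H$ whose chromatic polynomial is divisible by $x^2-5x+5$. A short check rules out all small $H$: for a connected graph on at most five vertices $\lambda(\lambda-1)$ already divides $P(H,\lambda)$, the remaining cofactor must be integer and monic, and the constraints $P(H,k)\ge 0$ at $k=2,3,4$ force the only five-vertex candidate to have $9$ edges, i.e.\ to be $K_5-e$; but $P(K_5-e,\lambda)=\lambda(\lambda-1)(\lambda-2)(\lambda-3)^2$ does not contain the factor $x^2-5x+5$. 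So any witness $H$ must have at least six vertices.

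To build and verify $H$ I would assemble a graph from triangles and cycles glued along common edges and compute $P(H,\lambda)$ using the edge-gluing identity $P(G_1\cup_{K_2}G_2,\lambda)=\dfrac{P(G_1,\lambda)\,P(G_2,\lambda)}{\lambda(\lambda-1)}$ together with the chromatic deletion--contraction recurrence; I would then factor the result and confirm that the irreducible factor $x^2-5x+5$ appears, or equivalently verify the single numerical identity $P(H,\phi+2)=0$ (its conjugate $3-\phi$ being a root automatically). Once such an $H$ is in hand, Theorem \ref{plus} with $n=1$ upgrades the root $\phi+2$ to $\phi+3$ and finishes the argument.

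The main obstacle is precisely the base case. Theorem \ref{plus} makes the final shift automatic but offers no help in producing the \emph{first} golden chromatic root, and the choice of $H$ is not canonical. The delicate step is to arrange a genuine simple graph whose deletion--contraction/gluing computation forces the irreducible quadratic $x^2-5x+5$ into its chromatic polynomial; after a correct candidate is identified, the verification reduces to a routine factorization.
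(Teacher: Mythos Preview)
The paper does not actually prove this theorem; it simply quotes it from \cite{Root}. So there is no in-paper argument to compare against. That said, your proposed route has a genuine gap and a structural misstep.

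The gap: your entire argument hinges on exhibiting a graph $H$ with $P(H,\phi+2)=0$, and you never produce one. You analyse the Galois obstruction, rule out graphs on at most five vertices, describe a toolkit (edge-gluing, deletion--contraction), and then write ``Once such an $H$ is in hand\ldots''. But no $H$ is ever in hand. Everything before that point is preparatory; the only step with content is the construction, and it is missing. As written this is a plan, not a proof.

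The misstep: reducing $\phi+3$ to $\phi+2$ is going the wrong way. In the paper's own narrative, $\phi+3$ being a chromatic root is the earlier and easier fact (from \cite{Root}, 2012), while $\phi+2$ being a chromatic root is singled out as a separate, later result of Harvey and Royle \cite{JGT} (2020), with witness graphs displayed in Figure~\ref{tt2}. The reason is visible in your own computation: the conjugate of $\phi+2$ is $3-\phi\approx 1.382$, barely above Jackson's bound $32/27$, so witnesses are delicate; the conjugate of $\phi+3$ is $4-\phi\approx 2.382$, comfortably inside the region where chromatic roots are abundant, so a direct witness for the factor $x^2-7x+11$ is much easier to manufacture. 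If you insist on using Theorem~\ref{plus}, you should be shifting \emph{up} from a base you can actually realise, not down to a harder base you cannot. The cleanest fix is to abandon the reduction and search directly for a small graph whose chromatic polynomial is divisible by $x^{2}-7x+11$.
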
 

By Theorems \ref{plus} and \ref{three} we have the following corollary: 

\begin{corollary}
For every natural number $n\geq 3$, $\phi+n$ is a chromatic root.
\end{corollary} 
Note that since $\phi+1=\phi^2$, by Theorem \ref{pn}, $\phi+1$ cannot be a chromatic roots. Harvey and Royle \cite{JGT} proved that $\phi+2$ is a chromatic root.
Two graphs in Figure \ref{tt2} have $\phi+2$  as a chromatic root.

\begin{figure}[h]
	\begin{center} 
		\includegraphics[width=0.7\textwidth]{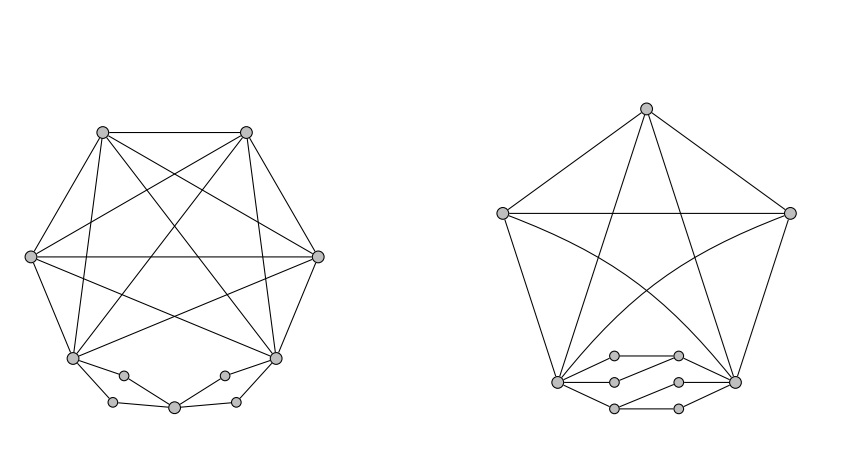}
		\caption{Two graphs  with $\phi+2$  as a chromatic root.}\label{tt2}
	\end{center} 
\end{figure}

Alikhani and Peng  studied the $n$-anacci constants and their powers as chromatic roots \cite{AADM2}. 
They  proved that the $2n$-anacci numbers cannot be zeros of any
chromatic polynomials.

\begin{theorem} \label{theorem8}{\rm\cite{AADM2}}
	For every integer $n\geq1$, the $2n$-anacci number $\varphi_{2n}$ and all natural powers of them cannot be zero of any chromatic polynomial.
\end{theorem}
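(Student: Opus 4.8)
The plan is to generalize the mechanism behind Theorem \ref{pn} (the case $\varphi_2=\phi$): for the even-indexed anacci constant $\varphi_{2n}$ and each natural power $\varphi_{2n}^k$, I would exhibit a Galois conjugate lying in one of the known root-free intervals, and then invoke the fact that chromatic polynomials have rational coefficients. Recall that $\varphi_{2n}$ is the unique real root greater than $1$ of the anacci polynomial $f_{2n}(x)=x^{2n}-x^{2n-1}-\cdots-x-1$, which satisfies $(x-1)f_{2n}(x)=x^{2n+1}-2x^{2n}+1$. The first ingredient I would establish is that $f_{2n}$ is irreducible over $\mathbb{Q}$, so that it is the minimal polynomial of $\varphi_{2n}$ and every root of $f_{2n}$ is a conjugate of $\varphi_{2n}$.

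The second ingredient, and really the crux, is a sign-change argument. Evaluating $f_{2n}$ at the endpoints $-1$ and $0$ gives $f_{2n}(0)=-1<0$ and $f_{2n}(-1)=1-\sum_{j=0}^{2n-1}(-1)^j=1>0$, the alternating sum vanishing precisely because $2n$ is even. By the intermediate value theorem, $f_{2n}$ therefore has a real root $\beta\in(-1,0)$. This is exactly where parity enters: for an odd index the analogous evaluation keeps the same sign, so no negative real conjugate is forced (the subdominant conjugates may all be non-real), which is why the statement is restricted to even indices.

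Now fix a natural number $k$. Since $\beta$ and $\varphi_{2n}$ are roots of the same irreducible polynomial $f_{2n}$, there is a field automorphism of the splitting field sending $\varphi_{2n}\mapsto\beta$, hence $\varphi_{2n}^k\mapsto\beta^k$; thus $\beta^k$ is a Galois conjugate of $\varphi_{2n}^k$, i.e.\ a root of its minimal polynomial over $\mathbb{Q}$. Because $\beta\in(-1,0)$ we have $\beta^k\in(-1,0)$ when $k$ is odd and $\beta^k\in(0,1)$ when $k$ is even, and $\beta^k\neq 0$ since $f_{2n}$ has nonzero constant term. In either case $\beta^k$ lies in $(-\infty,0)\cup(0,1)$, which is a root-free interval for every chromatic polynomial (\cite{jackson1}).

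Finally I would close the argument by the standard rationality observation. Suppose $\varphi_{2n}^k$ were a chromatic root, a root of some $P(G,\lambda)$. As $P(G,\lambda)$ has integer (in particular rational) coefficients, the minimal polynomial of $\varphi_{2n}^k$ over $\mathbb{Q}$ divides $P(G,\lambda)$, so all of its roots, and in particular $\beta^k$, are chromatic roots of $G$. This contradicts $\beta^k$ lying in a root-free interval, completing the proof. The main obstacle is the first step, the irreducibility of $f_{2n}$ (equivalently, the guarantee that the real root in $(-1,0)$ is genuinely a conjugate of $\varphi_{2n}$), since this is what transfers the forbidden location from $\beta^k$ back to $\varphi_{2n}^k$; the remaining steps are the intermediate value estimate and the elementary bookkeeping on the sign of $\beta^k$.
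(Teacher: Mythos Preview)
The survey paper does not supply its own proof of Theorem~\ref{theorem8}; it merely quotes the result from \cite{AADM2}. So there is no in-paper argument to compare against directly, and one has to look at the source. Your proposal is in fact precisely the strategy carried out in \cite{AADM2}: locate a real conjugate $\beta$ of $\varphi_{2n}$ in $(-1,0)$ via the sign change $f_{2n}(-1)=1>0>f_{2n}(0)=-1$ (which, as you correctly isolate, uses the even degree), pass to powers to land $\beta^k$ in $(-1,0)\cup(0,1)$, and then invoke the rationality of chromatic polynomials together with the classical root-free intervals $(-\infty,0)$ and $(0,1)$.

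The one point you flag as an obstacle---irreducibility of $f_{2n}$ over $\mathbb{Q}$---is also not proved in \cite{AADM2}; there it is imported from the literature (the irreducibility of $x^{m}-x^{m-1}-\cdots-x-1$ for all $m\ge 2$ is a known result). So your plan matches the original proof essentially step for step, with the same external dependence on irreducibility. Nothing further is needed.
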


 There is a conjecture  in \cite{AADM2} which state that  $(2n + 1)$-anacci numbers and all their natural powers also cannot be chromatic zeros. They obtained the following result related to $\varphi_{2n+1}$.
 
 \begin{theorem}\label{theorem11}{\rm\cite{AADM2}}
 	For every natural $n$, $\varphi_{2n+1}$ cannot be a root of the
 	chromatic polynomial of a connected  graph $G$ with $|V(G)|\leq
 	4n+2$.
 \end{theorem}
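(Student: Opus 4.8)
The plan is to exploit the interplay between two facts: the chromatic polynomial $P(G,\lambda)$ is monic with integer coefficients, so every chromatic root is an algebraic integer and, if $\alpha$ is a chromatic root of $G$, its minimal polynomial over $\mathbb{Q}$ divides $P(G,\lambda)$ in $\mathbb{Z}[\lambda]$ (Gauss's lemma); and the precise location of the algebraic conjugates of $\varphi_{2n+1}$. So the first step is to pin down the minimal polynomial. By definition $\varphi_{2n+1}$ is the unique real root larger than $1$ of
\[
q(x)=x^{2n+1}-x^{2n}-\cdots-x-1,
\]
and this $(2n+1)$-anacci polynomial is irreducible over $\mathbb{Q}$; hence $q$ is the minimal polynomial and $\varphi_{2n+1}$ is a Pisot number, all of whose remaining $2n$ conjugates lie strictly inside the unit disk. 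I would record here the crucial contrast with the even case of Theorem~\ref{theorem8}: for an even index the analogous polynomial has a real conjugate in $(-1,0)\subset(-\infty,0)$, which already lies in a root-free interval, and this kills that case for \emph{all} graphs; for the odd index every nonreal conjugate sits in the open unit disk and no conjugate falls into a known real root-free interval, which is exactly why one is forced to restrict the order of $G$.

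Next, assume for contradiction that $\varphi_{2n+1}$ is a chromatic root of a connected graph $G$ with $N:=|V(G)|\le 4n+2$. Since $q$ is monic, irreducible and divides $P(G,\lambda)$, we may write
\[
P(G,\lambda)=q(\lambda)\,s(\lambda),\qquad s\in\mathbb{Z}[\lambda],\quad \deg s=N-(2n+1)\le 2n+1 .
\]
The whole point of the hypothesis $N\le 4n+2=2(2n+1)$ is that the cofactor $s$ has degree at most that of $q$, hence very limited freedom. All $2n$ nonreal conjugates of $\varphi_{2n+1}$ are then chromatic roots of the \emph{same} $G$. Moreover, since $G$ is connected the standard factors force $0$ (and, as $N\ge 2$, also $1$) to be roots of $P$ that are not roots of $q$, because $q(0)=-1$ and $q(1)=-2n$; hence $0$ and $1$ are roots of $s$. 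Already this shows $\deg s\ge 2$ and disposes of the smallest orders directly.

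The final and hardest step is to extract a contradiction from this factorization. I would use Whitney's sign pattern: for a graph on $N$ vertices one has $P(G,\lambda)=\sum_{i=0}^{N}(-1)^{N-i}a_i\lambda^i$ with all $a_i\ge 0$, equivalently $(-1)^N P(G,-\lambda)$ has nonnegative coefficients, together with $a_{N-1}=|E(G)|\le\binom{N}{2}$ and $a_0,a_1\ge 0$. Substituting the factorization and computing the coefficients of $q(\lambda)s(\lambda)$ --- whose $q$-part carries the rigid all-$(-1)$ coefficient string --- I would show that forcing the product to obey the alternating-sign and nonnegativity constraints is impossible once $\deg s\le 2n+1$: the constrained cofactor simply cannot absorb the coefficient string of $q$ while keeping every $a_i$ of the correct sign. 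I expect this coefficient bookkeeping to be the main obstacle. One must keep tight control of \emph{all} coefficients of the unknown $s$ (using $s(0)=s(1)=0$ and the fact that the conjugates of $\varphi_{2n+1}$ lie in the unit disk to bound the remaining roots of $s$), rather than just one or two of them, since no single evaluation point yields the contradiction; for instance $q(2)=1$, so $P(G,2)=s(2)$ gives no obstruction by itself, which is precisely what makes the odd case delicate and confines the conclusion to graphs of bounded order.
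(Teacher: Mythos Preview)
The survey you are working from does not prove this theorem at all; it merely quotes the statement from \cite{AADM2}. There is therefore no proof in the present paper to compare your attempt against, and I can only assess your outline on its own merits.

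Your set-up is correct and standard: irreducibility of $q(x)=x^{2n+1}-x^{2n}-\cdots-1$ gives $q\mid P(G,\lambda)$ in $\mathbb{Z}[\lambda]$, the hypothesis $N\le 4n+2$ forces $\deg s\le 2n+1$, and since $q(0)=-1$ and $q(1)=-2n$ the obligatory chromatic roots $0$ and $1$ of a connected graph with an edge land in $s$, so $s(\lambda)=\lambda(\lambda-1)t(\lambda)$ with $\deg t\le 2n-1$. Where your proposal stops being a proof is exactly the step you flag yourself: you announce a ``coefficient bookkeeping'' via Whitney's sign pattern but never perform it, and the remark about using the unit-disk location of the conjugates to ``bound the remaining roots of $s$'' conflates roots of $q$ with roots of $s$ and plays no role.

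In fact the bookkeeping is short once you notice the telescoping identity
\[
(\lambda-1)\,q(\lambda)=\lambda^{2n+2}-2\lambda^{2n+1}+1,
\qquad\text{hence}\qquad
\lambda(\lambda-1)\,q(\lambda)=\lambda^{2n+3}-2\lambda^{2n+2}+\lambda .
\]
Thus $P(G,\lambda)=\bigl(\lambda^{2n+3}-2\lambda^{2n+2}+\lambda\bigr)\,t(\lambda)$ with $\deg t\le 2n-1$. The three monomials on the left contribute only to degrees in $\{1,\dots,1+\deg t\}\cup\{2n+2,\dots,2n+3+\deg t\}$, so the coefficient of $\lambda^{2n+1}$ in $P(G,\lambda)$ is $0$. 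But for a \emph{connected} graph on $N$ vertices every Whitney coefficient $a_i$ with $1\le i\le N$ is strictly positive, and here $1\le 2n+1<N$; this is the contradiction. So your strategy does succeed, but the proof hinges on the strict positivity of the Whitney coefficients together with the above identity, neither of which you actually wrote down.
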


  \medskip
  
  A set $S\subseteq V$ is a dominating set of a graph $G$, if every vertex in $V \setminus S$  has at least one neighbor in $S$. The cardinality of a minimum  dominating set in $G$ is called the domination number of $G$ and is denoted by $\gamma(G)$. 
 The various different domination concepts are
 well-studied now, however new concepts are introduced frequently and the interest is growing
 rapidly. We recommend excellent so-called domination books by Haynes, Hedetniemi, and Slater \cite{7,8}.

Domination polynomial of a graph $G$ is $D(G,x)=\sum d(G,k)x^k$ is the generating function for the number of dominating sets of $G$. Note that $d(G,k)$ is the number of dominating sets of $G$ of size $k$. A root of $D(G,x)$  is
called a domination root of $G$. The set of distinct domination roots of graph $G$ is denoted 
by $Z(D(G,x))$.

For domination polynomial of a graph, it is clear that $(0,\infty)$  is zero-free interval. Brouwer \cite{Brou}  has shown that $-1$ cannot be domination root of any graph $G$. For more details of the domination polynomial of a graph at $-1$ refer to \cite{GCOM}.
It also have shown that every integer domination root is even \cite{GCOM}. 
Alikhani and Hasni proved the following theorem:  

\begin{theorem}{\rm\cite{Root}}
If $n$ is an odd natural number, then $-\phi^n$ cannot be domination roots.
\end{theorem}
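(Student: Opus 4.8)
The plan is to exploit the fact that every domination polynomial lies in $\mathbb{Z}[x]$, so that a domination root which is algebraic of degree two over $\mathbb{Q}$ must be accompanied by its Galois conjugate. First I would record the minimal polynomial of $-\phi^n$. Since $\phi$ and $1-\phi$ are the two roots of $x^2-x-1$, we have $\phi(1-\phi)=-1$, and by Theorem \ref{Binet} their $n$-th powers satisfy $\phi^n+(1-\phi)^n=L_n$. For odd $n$,
\[
\phi^n(1-\phi)^n=\big(\phi(1-\phi)\big)^n=(-1)^n=-1,
\]
so $\phi^n$ and $(1-\phi)^n$ are exactly the two roots of $x^2-L_nx-1$. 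Replacing $x$ by $-x$, the numbers $-\phi^n$ and $-(1-\phi)^n$ are the two roots of $x^2+L_nx-1$. Because $\phi^n=F_n\phi+F_{n-1}$ with $F_n\geq1$ is irrational, this quadratic is irreducible over $\mathbb{Q}$ and is therefore the minimal polynomial of $-\phi^n$.

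Next I would argue by contradiction. Suppose $-\phi^n$ were a domination root of some graph $G$, that is $D(G,-\phi^n)=0$. Since $D(G,x)$ has integer coefficients, the minimal polynomial $x^2+L_nx-1$ of $-\phi^n$ divides $D(G,x)$ in $\mathbb{Q}[x]$; in particular the conjugate root $-(1-\phi)^n$ must also be a domination root of $G$.

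Finally I would locate this conjugate root on the real line. As $1-\phi=\frac{1-\sqrt5}{2}<0$ and $n$ is odd, we get $(1-\phi)^n<0$, whence $-(1-\phi)^n>0$. Thus $G$ would have a strictly positive domination root, contradicting the fact recalled above that $(0,\infty)$ is a zero-free interval for every domination polynomial. This contradiction shows that $-\phi^n$ cannot be a domination root.

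The step I expect to be the crux is the parity computation $\phi^n(1-\phi)^n=(-1)^n$: it is precisely the oddness of $n$ that makes this product equal to $-1$, which is what forces the conjugate root to lie in $(0,\infty)$. For even $n$ the product is $+1$, both conjugate roots are negative, and no contradiction with the zero-free interval arises, which is exactly why the hypothesis that $n$ be odd is indispensable. The only routine point to confirm is that the minimal polynomial genuinely has degree two, and this follows at once from the irrationality of $\phi^n$.
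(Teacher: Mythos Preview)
The survey paper itself does not include a proof of this theorem; it merely quotes the result from \cite{Root}. Your argument is correct and is precisely the approach used in that original source: exploit that $D(G,x)\in\mathbb{Z}[x]$ so that the Galois conjugate $-(1-\phi)^n$ of $-\phi^n$ must also be a root, and then observe that for odd $n$ this conjugate is strictly positive, contradicting the zero-free interval $(0,\infty)$ for domination polynomials noted in the paper. Your identification of the parity computation $\phi^n(1-\phi)^n=(-1)^n$ as the crux is exactly right, and your remark on why the argument breaks down for even $n$ is also accurate.
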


Let us recall the corona of two graphs. The corona of two graphs $G_1$ and $G_2$, as defined
by Frucht and Harary in \cite{Frucht}, is the graph $G_1\circ G_2$  formed from one copy of $G_1$ and
$|V(G_1)|$  copies of $G_2$, where the $i$th vertex of $G_1$ is adjacent to every vertex in the $i$th copy of
$G_2$.

\begin{theorem}\label{theorem5}{\rm\cite{euro}}
	Let $G$ be a connected graph of order $n$. Then,
	$Z(D(G,x))= \{0, \frac{-3\pm \sqrt{5}}{2}\}$, if and only if
	$G=H\circ \overline{K_2}$, for some graph $H$.
	Indeed $D(G,x)=x^{\frac{n}{3}}(x^2+3x+1)^{\frac{n}{3}}$.
\end{theorem}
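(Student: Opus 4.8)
The plan is to prove both implications, and the cleaner half is the ``if'' direction together with the explicit formula. Suppose $G = H \circ \overline{K_2}$ with $|V(H)| = m$, so that $n = 3m$. By the definition of the corona each vertex $v_i$ of $H$ acquires two pendant neighbours $u_i^1, u_i^2$ of degree one (the copy of $\overline{K_2}$ has no internal edges), and the vertex set of $G$ partitions into the $m$ triples $T_i = \{v_i, u_i^1, u_i^2\}$, each inducing a path $P_3$ centred at $v_i$. First I would record the local domination condition: since a leaf $u_i^j$ can only be dominated by itself or by $v_i$, a set $S$ dominates both leaves of $T_i$ if and only if $v_i \in S$ or $\{u_i^1,u_i^2\}\subseteq S$, and in either case $v_i$ is itself dominated. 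Hence $S$ is a dominating set of $G$ exactly when this condition holds for every $i$, a conjunction of conditions each involving only one triple $T_i$.

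The key point is that this decoupling makes the generating function multiplicative over the triples,
\[
D(G,x) = \prod_{i=1}^{m} g(x),
\]
where $g(x)$ is the contribution of one gadget. Enumerating the admissible local configurations weighted by $x^{|S\cap T_i|}$ gives $g(x) = x(1+x)^2 + x^2 = x^3 + 3x^2 + x = x(x^2+3x+1)$, which is exactly $D(P_3,x)$. Therefore $D(G,x) = x^m(x^2+3x+1)^m = x^{n/3}(x^2+3x+1)^{n/3}$, establishing the ``Indeed'' clause; its distinct roots are $0$ together with the two roots $\frac{-3\pm\sqrt5}{2}$ of $x^2+3x+1$, so $Z(D(G,x)) = \{0, \frac{-3\pm\sqrt5}{2}\}$.

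For the converse, assume $Z(D(G,x)) = \{0, \frac{-3\pm\sqrt5}{2}\}$. Since $\frac{-3\pm\sqrt5}{2}$ are precisely the conjugate roots of $x^2+3x+1$, and $D(G,x)$ has integer coefficients with leading coefficient $d(G,n)=1$, the polynomial must factor as $D(G,x) = x^a(x^2+3x+1)^b$ with $a=\gamma(G)\geq 1$, $b\geq 1$, and $a+2b=n$. To pin down $a$ and $b$ I would compare the subleading coefficient: because $G$ is connected of order $n\geq 2$ it has no isolated vertex, so $d(G,n-1)$, the number of sets $V\setminus\{v\}$ that dominate, equals $n$; reading the coefficient of $x^{n-1}$ off the factorisation gives $3b$. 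Thus $3b = n = a+2b$, forcing $a=b=n/3=:m$ and $D(G,x)=x^m(x^2+3x+1)^m$.

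The remaining, and genuinely harder, step is to recover the corona structure from this polynomial. From $d(G,m)=1$ the graph has a unique minimum dominating set $D_0$ with $|D_0|=m$, the natural candidate for $V(H)$; and since the $2m$ supersets $D_0\cup\{w\}$ exhaust the size-$(m+1)$ dominating sets that contain $D_0$, exactly $m$ of the $d(G,m+1)=3m$ such sets avoid $D_0$. The plan is to show that the symmetric differences $D_0\,\triangle\,S$ over these $m$ special sets are pairwise disjoint triples, each consisting of one vertex $v\in D_0$ together with two of its private degree-one neighbours, so that $V\setminus D_0$ splits into $m$ pendant pairs attached to distinct vertices of $D_0$; one then checks that no further edges are incident to $V\setminus D_0$, which identifies $G$ with $G[D_0]\circ\overline{K_2}$. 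The main obstacle is exactly this structural extraction: the coefficient data must be leveraged to guarantee that the vertices outside $D_0$ are genuine leaves paired two-to-a-vertex rather than some other configuration sharing the same domination polynomial, and I expect this to require an inductive ``peeling'' of one gadget at a time, using the coefficient $d(G,n-2)$ to control which vertices possess a unique dominator.
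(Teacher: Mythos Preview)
This survey paper does not prove the theorem; it is quoted from \cite{euro} without argument, so there is no in-paper proof to compare your attempt against.

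On the merits: your ``if'' direction is correct and complete. The key observation---that the domination condition for $H\circ\overline{K_2}$ decouples into independent local constraints on the triples $T_i$, since once each triple satisfies ``$v_i\in S$ or $\{u_i^1,u_i^2\}\subseteq S$'' the centre $v_i$ is automatically dominated regardless of the edges of $H$---is exactly right, and the product formula $D(G,x)=\bigl(x(x^2+3x+1)\bigr)^m$ follows at once.

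For the converse, your reduction to $D(G,x)=x^m(x^2+3x+1)^m$ via the $x^{n-1}$ coefficient is valid. But you yourself flag the structural recovery of $G$ as only sketched, and that is accurate. The outline---locate the unique minimum dominating set $D_0$, then argue that the $m$ size-$(m{+}1)$ dominating sets not containing $D_0$ each arise by swapping one $v_i$ for two pendant leaves---is a reasonable heuristic, yet nothing you have written actually establishes that the vertices of $V\setminus D_0$ are leaves, that they are partitioned into pairs with a common neighbour in $D_0$, or that no edges lie within $V\setminus D_0$. Your proposed use of $d(G,n-2)$ to detect vertices with a unique dominator is a sensible ingredient, but the argument connecting these coefficient constraints to the corona structure is not supplied. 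As it stands, the converse is a plan rather than a proof.
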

The following corollary is an immediate consequence of Theorem \ref{theorem5}.

\begin{corollary} 
 All graphs of the form $H\circ \overline{K_2}$, have $-\phi^2$ as domination roots.
 \end{corollary}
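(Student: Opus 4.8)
The plan is to read the conclusion directly off Theorem \ref{theorem5}, since the corollary is nothing more than a relabeling of one of the three roots already recorded there in golden-ratio notation. First I would invoke Theorem \ref{theorem5}: every graph of the form $G = H\circ\overline{K_2}$ has domination polynomial $D(G,x) = x^{n/3}(x^2+3x+1)^{n/3}$, and hence its set of distinct domination roots is exactly $\{0,\ \frac{-3+\sqrt{5}}{2},\ \frac{-3-\sqrt{5}}{2}\}$. The two nonzero roots are precisely the roots of the quadratic factor $x^2+3x+1$, namely $\frac{-3\pm\sqrt{5}}{2}$.

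Next I would express $-\phi^2$ in closed form. Using the defining identity $\phi^2 = \phi+1$ (equivalently, that $\phi$ satisfies $x^2-x-1=0$) together with $\phi = \frac{1+\sqrt{5}}{2}$, one obtains $\phi^2 = \frac{3+\sqrt{5}}{2}$, so that $-\phi^2 = \frac{-3-\sqrt{5}}{2}$. This is exactly the root obtained by choosing the minus sign in $\frac{-3\pm\sqrt{5}}{2}$. If a self-contained verification is preferred, one can instead substitute $x=-\phi^2$ into the factor $x^2+3x+1$ and check that it vanishes: since $\phi^4 = (\phi+1)^2 = 3\phi+2$ and $3\phi^2 = 3\phi+3$, we get $(-\phi^2)^2 + 3(-\phi^2) + 1 = \phi^4 - 3\phi^2 + 1 = (3\phi+2)-(3\phi+3)+1 = 0$.

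Finally I would note that this value lies in the root set for every admissible choice of $H$, since the polynomial $x^2+3x+1$ occurs (raised to the power $n/3$) in $D(G,x)$ independently of $H$. Therefore $-\phi^2$ is a domination root of every graph of the form $H\circ\overline{K_2}$. There is essentially no obstacle in this argument: the entire content reduces to the arithmetic identity $-\phi^2 = \frac{-3-\sqrt{5}}{2}$, after which the statement is immediate from Theorem \ref{theorem5}. The only point worth stating carefully is that we are identifying the \emph{negative} square of the golden ratio with the \emph{smaller} of the two quadratic roots, so that the sign conventions match.
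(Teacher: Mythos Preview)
Your proposal is correct and follows exactly the route the paper intends: the paper states the corollary as an immediate consequence of Theorem~\ref{theorem5}, and you simply spell out that immediacy by identifying $-\phi^2=\frac{-3-\sqrt{5}}{2}$ with one of the listed roots. Your added verification via $\phi^4-3\phi^2+1=0$ is a pleasant redundancy but not required.
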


\section{The number of domatic partition and $\phi$ }

 A domatic partition is a partition
 of the vertex set into dominating sets, in other words, a partition $\pi$ = $\{V_1, V_2, . . . , V_k \}$ of $V(G)$  
 such that every set $V_{i}$ is a dominating set in $G$. 
 Cockayne and Hedetniemi \cite{Cockayne} introduced the domatic number of a graph $d(G)$ as the maximum order $k$ of a vertex partition. For more details on the domatic number refer to e.g., \cite{11,12,13}.

 Motivated by enumerating of  the number of dominating sets of a graph and domination polynomial (see e.g. \cite{euro}), the enumeration of  the domatic partition for certain
 graphs is a natural subject.

 \begin{definition}\label{def}
 	Let ${\cal DP}(G,i)$ be the family of
 	domatic partition of a graph $G$ with cardinality $i$, and let
 	$dp(G,i)=|{\cal DP}(G,i)|$. 
 \end{definition}

In the following we show that 
$$\lim_{n \to \infty}\frac{dp(P_{n+1},2)}{dp(P_{n},2)}=\phi.$$

\begin{figure}[h]
	\begin{center} 
		\includegraphics[width=0.7\textwidth]{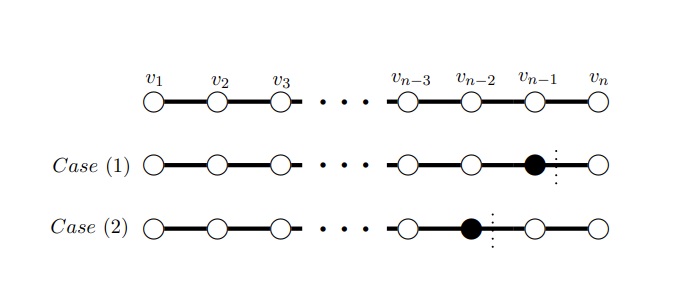}
		\end{center}
	\caption{Path graph and possible cases related to the Theorem \ref{thm:path-domatic}.} \label{fig:path-domatic}
\end{figure}


\begin{theorem}\label{thm:path-domatic}
	For $n\geq 4$, $dp(P_n,2)=dp(P_{n-1},2)+dp(P_{n-2},2)$, where $dp(P_2,2)=dp(P_3,2)=1$. 
\end{theorem}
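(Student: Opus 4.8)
The plan is to turn the domatic condition into a statement about two-colourings of the path and then extract the recurrence from what happens at one end. Write $P_n$ as $v_1v_2\cdots v_n$ and view a partition $\{V_1,V_2\}$ as a colouring of the vertices with two colours. Requiring that \emph{both} $V_1$ and $V_2$ dominate is equivalent to demanding that every vertex has a neighbour of the opposite colour: $V_1$ dominating says every colour-$2$ vertex has a colour-$1$ neighbour, and $V_2$ dominating says the reverse. Since the endpoints $v_1$ and $v_n$ have a unique neighbour, this forces $c(v_1)\neq c(v_2)$ and $c(v_{n-1})\neq c(v_n)$, while for an interior vertex it forbids $c(v_{i-1})=c(v_i)=c(v_{i+1})$. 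Hence the size-two domatic partitions are exactly the two-colourings with no monochromatic run of length $\geq 3$ whose first and last runs both have length one. Because a colour swap never fixes a colouring, fixing $c(v_1)$ puts these colourings in bijection with the partitions, so I would count colourings with $c(v_1)$ prescribed.

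With this dictionary I would prove the recurrence by the case split drawn in Figure \ref{fig:path-domatic}, according to the colour of $v_{n-2}$ (the endpoint rule already forces $c(v_{n-1})\neq c(v_n)$, so the last run has length one). If $c(v_{n-2})\neq c(v_{n-1})$, deleting $v_n$ gives a valid colouring of $P_{n-1}$: its new endpoint condition $c(v_{n-2})\neq c(v_{n-1})$ holds and no long run is created, and conversely every valid colouring of $P_{n-1}$ extends uniquely by appending $v_n$ in the forced opposite colour; this is a bijection counting $dp(P_{n-1},2)$. If instead $c(v_{n-2})=c(v_{n-1})$, the no-three-in-a-row rule forces $c(v_{n-3})\neq c(v_{n-2})$, so deleting $v_{n-1}$ and $v_n$ gives a valid colouring of $P_{n-2}$, and conversely such a colouring extends uniquely by appending a copy of $c(v_{n-2})$ followed by its opposite; this bijection counts $dp(P_{n-2},2)$. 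Every valid colouring lies in exactly one case, giving $dp(P_n,2)=dp(P_{n-1},2)+dp(P_{n-2},2)$; the reference to $v_{n-3}$ is what forces $n\geq 4$.

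I expect the main obstacle to be verifying that the two extension maps are genuine bijections, i.e. that appending the forced colours always recreates a \emph{valid} colouring rather than one with a run of length three or a spoiled endpoint. The care is concentrated at the degree-one endpoints, whose condition must be re-established on the shorter path and re-checked after re-extending, and at the join where runs can be created or merged. Finally I would settle the base cases by inspection: for $P_2$ the only partition is $\{\{v_1\},\{v_2\}\}$, and for $P_3$ the only colouring with $c(v_1)$ fixed is $c(v_1)=c(v_3)\neq c(v_2)$, so $dp(P_2,2)=dp(P_3,2)=1$.
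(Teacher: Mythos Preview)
Your proposal is correct and follows essentially the same route as the paper: both arguments observe that the two endpoints must differ in colour from their unique neighbours, then split the domatic partitions of $P_n$ according to whether $c(v_{n-2})$ agrees with $c(v_{n-1})$, and identify the two cases bijectively with the domatic partitions of $P_{n-1}$ and $P_{n-2}$ respectively. Your translation into two-colourings with no monochromatic run of length at least three (and length-one runs at the ends), together with the normalization of fixing $c(v_1)$, makes the bijectivity more transparent than the paper's slightly informal ``we have counted all cases'' argument, but the underlying decomposition is the same one depicted in Figure~\ref{fig:path-domatic}.
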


\begin{proof}
	It is easy to see that $dp(P_2,2)=dp(P_3,2)=1$. Now, suppose that $n\geq 4$ and  let $V(P_n)=\{v_1,v_2,\ldots,v_n\}$ as we see in figure \ref{fig:path-domatic}. Suppose that $D=\{A,B\}$ is a domatic partition of $P_n$. If $\{v_{n-1},v_{n}\}\subseteq A$, then $D$ is not a domatic partion, because then $B$ is not a dominating set, since $v_n$ is not dominated by any other vertices of $B$. So without loss of generality, we have $v_{n-1}\in A$ and $v_{n}\in B$. To show that $dp(P_n,2)=dp(P_{n-1},2)+dp(P_{n-2},2)$, consider the following cases:
	\begin{itemize}
		\item[$Case ~(1).$]
		If we consider $P_{n-1}$, and let $D_1=\{A_1,B_1\}$ a domatic partion of $P_{n-1}$, then without loss of generality suppose that $v_{n-1}\in A_1$. Then by our argument, $v_{n-2}\in B_1$. In this case let, $A=A_1$ and $B=B_1\cup\{v_n\}$. Then it is clear that $D=\{A,B\}$ is a domatic partition of $P_n$, and we have $dp(P_{n-1},2)$ domatic partition of size 2 for $P_n$.
		\item[$Case ~(2).$]
		If we consider $P_{n-2}$, and let $D_2=\{A_2,B_2\}$ a domatic partion of $P_{n-2}$, then without loss of generality suppose that $v_{n-2}\in A_2$. Then by our argument, $v_{n-3}\in B_2$. In this case let, $A=A_2\cup\{v_{n-1}\}$ and $B=B_2\cup\{v_{n}\}$. Then it is clear that $D=\{A,B\}$ is a domatic partition of $P_n$, and we have $dp(P_{n-2},2)$ domatic partition of size 2 for $P_n$.
	\end{itemize}
	Clearly, the domatic partitions from Case~(1) and Case~(2) are different. We claim that we have counted all the possible cases and only need to know what is happening to $v_{n-1}$ and $v_n$ for domatic partitions of $P_n$, and therefore we don't need to see what happens in other cases and other cases are included in the mentioned ones. 
	To show our claim, suppose that we continue our cases. If we consider $P_{n-3}$, and let $D_3=\{A_3,B_3\}$ a domatic partion of $P_{n-3}$, $v_{n-3}\in A_3$, and $v_{n-4}\in B_3$, then to make  dominating sets from $A_3$ and $B_3$ to have domatic partition, we need to consider $A=A_3\cup\{v_{n-1}\}=A_1$ and $B=B_3\cup\{v_{n-2},v_{n}\}=B_1\cup\{v_n\}$ which is calculated in Case~(1), or $A=A_3\cup\{v_{n-2},v_{n}\}=B_1\cup\{v_n\}$ and $B=B_3\cup\{v_{n-1}\}=A_1$ which is calculated in Case~(1) too. By a similar argument, we conclude that we have counted all domatic partitions for other cases too.
	Note that if a dominating set has three consequent vertices, then its complement is not a dominating set. Therefore we have the result. 
	\qed
\end{proof}

As an immediate result of Theorem \ref{thm:path-domatic}, we have:

\begin{corollary}
	$\lim_{n \to \infty}\frac{dp(P_{n+1},2)}{dp(P_{n},2)}=\phi.$ 
\end{corollary}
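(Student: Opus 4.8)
The corollary is essentially immediate once Theorem~\ref{thm:path-domatic} is in hand, so the plan is simply to translate the Fibonacci-type recurrence it provides into the stated limit. Writing $a_n := dp(P_n,2)$, the theorem tells us that $a_n = a_{n-1} + a_{n-2}$ for $n \geq 4$, with $a_2 = a_3 = 1$. These are exactly the recurrence and (index-shifted) initial data of the Fibonacci sequence, so a one-line induction gives $a_n = F_{n-1}$ for all $n \geq 2$: the base cases are $a_2 = F_1 = 1$ and $a_3 = F_2 = 1$, and the inductive step is the recurrence itself.

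With this identification the target ratio becomes $a_{n+1}/a_n = F_n/F_{n-1}$, and I would evaluate its limit via Binet's formula (Theorem~\ref{Binet}). Substituting $F_k = \tfrac{1}{\sqrt 5}\bigl(\phi^k - (1-\phi)^k\bigr)$ and factoring out the dominant powers $\phi^n$ and $\phi^{n-1}$ from numerator and denominator turns the ratio into $\phi$ times a quotient of the shape $(1 - q^n)/(1 - q^{n-1})$, where $q = (1-\phi)/\phi$.

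The only thing to verify is that this correction factor tends to $1$, which comes down to the observation that $|1-\phi| = \phi - 1 < \phi$, whence $|q| < 1$ and $q^n \to 0$. This is the routine heart of the classical fact $\lim_{n\to\infty} F_{n+1}/F_n = \phi$ already recalled in the introduction, so there is no genuine obstacle here---the substantive work lives entirely in Theorem~\ref{thm:path-domatic}. If one prefers to bypass Binet, an equally short route is to set $r_n = a_{n+1}/a_n$, rewrite the recurrence as $r_n = 1 + 1/r_{n-1}$, and note that any limit $L$ must satisfy $L = 1 + 1/L$, i.e.\ $L^2 - L - 1 = 0$; since all the $r_n$ are positive, the limit is forced to be the positive root $\phi$.
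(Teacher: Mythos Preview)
Your proof is correct and follows essentially the same approach as the paper: the paper's proof is the single sentence ``Since $dp(P_n,2)$ follows the Fibonacci sequence, we have the result,'' and you have simply supplied the details behind that sentence by identifying $a_n = F_{n-1}$ and invoking Binet's formula. One small remark on your alternative route via $r_n = 1 + 1/r_{n-1}$: as written it only shows that \emph{if} the limit exists then it equals $\phi$, so a brief word on existence (e.g.\ monotonicity of even- and odd-indexed subsequences, or simply deferring to the Binet argument) would be needed to make that variant self-contained.
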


\begin{proof}
	Since $dp(P_n,2)$ follows the Fibonacci sequence, we have the result.\qed
\end{proof}	
 
 \medskip

In \cite{domaticp} we extend this result to weak $2$-coloring of a graph. Let recall the following definition: 

\begin{definition}{\rm\cite{domaticp}}
A weak $k$-coloring of a graph $G=(V,E)$ assigns a color $c(v)\in \{1,2,\ldots,k\}$ to each vertex $v\in V$, such that each non-isolated vertex is adjacent to at least one vertex with different color. 
\end{definition} 
So a weak $2$-coloring of a graph is equivalent to finding a domatic partition of a graph of size $2$. In the following, let ${\cal W}(G,2)$ be the family of
weak $2$-coloring of a graph $G$, and let
$w_2(G)=|{\cal W}(G,2)|$. So $dp(G,2)=w_2(G)$.

 \begin{theorem}{\rm\cite{domaticp}}
 	\label{co1}
 	For any path $P_n$ of order $n\geq 4$, 
 	$$w_2(P_n)=w_2(P_{n-1})+w_2(P_{n-2}),$$ 
 	where $w_2(P_3)=w_2(P_2)=1$.
 \end{theorem}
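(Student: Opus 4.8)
The plan is to reduce the statement to material already in hand and, for completeness, to give a direct counting argument. The quickest route is to recall that a weak $2$-coloring of a graph is exactly a domatic partition of size $2$, which the excerpt records as the identity $dp(G,2)=w_2(G)$. Under this identity the asserted recurrence, together with the initial values $w_2(P_3)=w_2(P_2)=1$, is literally Theorem \ref{thm:path-domatic} applied to $dp(P_n,2)$, so the result follows at once. I would state this reduction first.

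For a self-contained proof I would instead encode weak $2$-colorings combinatorially. Fixing the palette $\{1,2\}$ and $V(P_n)=\{v_1,\dots,v_n\}$, a coloring $c$ is a word in $\{1,2\}^n$, and since every vertex of $P_n$ (for $n\ge 2$) is non-isolated, the weak-coloring condition says precisely that (i) $c(v_1)\neq c(v_2)$, (ii) $c(v_{n-1})\neq c(v_n)$, and (iii) no three consecutive vertices receive the same color. Because a valid coloring is never monochromatic, swapping the two colors is a fixed-point-free involution on the set of valid words, so the number of unordered partitions is half the number of valid words; it therefore suffices to prove the Fibonacci recurrence for the count $W_n$ of valid words and then divide by two.

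To obtain $W_n=W_{n-1}+W_{n-2}$ I would condition on the tail. Condition (ii) forces $c(v_{n-1})\neq c(v_n)$, and I split on $c(v_{n-2})$. If $c(v_{n-2})\neq c(v_{n-1})$, deleting $v_n$ leaves a word on $v_1,\dots,v_{n-1}$ satisfying (i)--(iii) for $P_{n-1}$ (its new right end is exactly $c(v_{n-2})\neq c(v_{n-1})$), and conversely the deleted color is forced by (ii); this is a bijection onto the valid words of $P_{n-1}$. If instead $c(v_{n-2})=c(v_{n-1})$, then (iii) forces $c(v_{n-3})\neq c(v_{n-2})$ (here $n\ge 4$ is used), and deleting $v_{n-1},v_n$ yields a valid word for $P_{n-2}$, again bijectively, since the two deleted colors are determined, one by the case assumption and one by (ii). The two cases are disjoint and exhaustive, giving $W_n=W_{n-1}+W_{n-2}$; halving and checking $W_2=W_3=2$ (so $w_2(P_2)=w_2(P_3)=1$) finishes the argument.

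The main obstacle is purely bookkeeping at the boundary: one must verify in each direction of both bijections that the endpoint inequalities (i)--(ii) of the \emph{shorter} path are inherited and that appending the forced color(s) never creates a monochromatic triple, and one must normalize the unordered-partition count (the division by two) consistently, which is exactly what fixes the initial values to $1$ rather than $2$.
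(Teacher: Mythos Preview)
Your reduction is exactly what the paper does: it records the identity $dp(G,2)=w_2(G)$ and has already proved Theorem~\ref{thm:path-domatic}, so Theorem~\ref{co1} is immediate, and your self-contained word-encoding argument is the same tail bijection as the paper's proof of Theorem~\ref{thm:path-domatic}, just phrased via $\{1,2\}$-words instead of the partition $\{A,B\}$. Both are correct and essentially the same approach.
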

 As an immediate result of Corollary \ref{co1}, we have:
 \begin{corollary}
 	$\lim_{n \to \infty}\frac{w_2(P_{n+1})}{w_2(P_n)}=\phi.$
  \end{corollary}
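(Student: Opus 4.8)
The plan is to reduce the statement to the classical limit of ratios of consecutive Fibonacci numbers. By Theorem~\ref{co1}, the sequence $w_2(P_n)$ satisfies the recurrence $w_2(P_n)=w_2(P_{n-1})+w_2(P_{n-2})$ for $n\geq 4$, together with the initial values $w_2(P_2)=w_2(P_3)=1$. These are exactly the recurrence and (shifted) initial data of the Fibonacci sequence, so the first step is to identify $w_2(P_n)=F_{n-1}$ for all $n\geq 2$, where $F_n$ is the Fibonacci sequence defined in the introduction. This identification follows by a one-line induction: the base cases give $w_2(P_2)=1=F_1$ and $w_2(P_3)=1=F_2$, and the inductive step is immediate from the matching recurrences.

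Having established this, the ratio in question becomes $\frac{w_2(P_{n+1})}{w_2(P_n)}=\frac{F_n}{F_{n-1}}$, and it remains only to show that this tends to $\phi$. The quickest route is to invoke directly the limit ${\lim}_{n\rightarrow\infty}\frac{F_{n+1}}{F_n}=\phi$ recalled in the introduction, which settles the claim at once. This is also why the statement is, in content, identical to the earlier corollary on $dp(P_n,2)$: since $dp(G,2)=w_2(G)$ as noted after the definition of weak $2$-colouring, the two limits coincide, and one could in principle simply cite that corollary.

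For a self-contained argument I would instead appeal to Binet's formula (Theorem~\ref{Binet}) and write
\[
\frac{F_n}{F_{n-1}}=\frac{\phi^n-(1-\phi)^n}{\phi^{n-1}-(1-\phi)^{n-1}}.
\]
Factoring $\phi^{n-1}$ out of numerator and denominator and using that $|1-\phi|=\phi^{-1}<1$, so that $\left(\frac{1-\phi}{\phi}\right)^{n}\to 0$ as $n\to\infty$, the right-hand side converges to $\phi$. There is no genuine obstacle here; the only point requiring a moment's care is the bookkeeping of the index shift $w_2(P_n)=F_{n-1}$, after which the result is a direct consequence of Theorem~\ref{co1}.
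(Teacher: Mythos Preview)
Your proposal is correct and follows essentially the same route as the paper: the paper treats the corollary as an immediate consequence of Theorem~\ref{co1} (mirroring the one-line proof of the earlier $dp(P_n,2)$ corollary, ``since it follows the Fibonacci sequence''), and you simply make this explicit by identifying $w_2(P_n)=F_{n-1}$ and invoking the Fibonacci ratio limit. Your additional Binet-based computation is a perfectly valid self-contained alternative but is not needed for alignment with the paper's argument.
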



\end{document}